\author{Esen Aksoy Yazici, Brendan Murphy, Misha Rudnev, and Ilya Shkredov}
\date{December 2015}
\title{Growth Estimates in Positive Characteristic via Collisions}
\address{Esen Aksoy Yazici}
\email{eaksoyzc@gmail.com}
\address{Brendan Murphy, Department of Mathematics, University of
  Rochester, Rochester, NY 14627, USA}
\email{bmurphy8@ur.rochester.edu}
\address{Misha Rudnev, Department of Mathematics, University of Bristol,
Bristol BS8 1TW, United Kingdom}
\email{m.rudnev@bristol.ac.uk}
\address{Ilya Shkredov, Steklov Mathematical Institute, Division of Algebra and Number Theory, ul. Gubkina, 8, Moscow, Russia, 119991 and IITP RAS, Bolshoy Karetny per. 19, Moscow, Russia, 127994}
\email{ilya.shkredov@gmail.com}
\subjclass[2000]{68R05,11B75}
\begin{document}

\begin{abstract} Let $F$  be a field of characteristic $p>2$ and $A\subset F$ have sufficiently small cardinality in terms of $p$. We improve the state of the art of a variety of sum-product type inequalities. In particular, we prove that
$$
|AA|^2|A+A|^3 \gg |A|^6,\qquad |A(A+A)|\gg |A|^{3/2}.
$$
We also prove several two-variable extractor estimates: 
${\displaystyle 
|A(A+1)| \gg|A|^{9/8},}$ $$ |A+A^2|\gg |A|^{11/10},\; |A+A^3|\gg |A|^{29/28}, \; |A+1/A|\gg |A|^{31/30}.$$

Besides, we address  questions of cardinalities $|A+A|$ vs $|f(A)+f(A)|$, for a polynomial $f$, where we establish the inequalities
$$
\max(|A+A|,\, |A^2+A^2|)\gg |A|^{8/7}, \;\; \max(|A-A|,\, |A^3+A^3|)\gg |A|^{17/16}.
$$
Szemer\'edi-Trotter type implications of the arithmetic estimates in question are that a Cartesian product point set $P=A\times B$ in $F^2$, of $n$ elements, with $|B|\leq |A|< p^{2/3}$ makes $O(n^{3/4}m^{2/3} + m + n)$ incidences with any set of $m$ lines. In particular, when $|A|=|B|$, there are  $\ll n^{9/4}$ collinear triples of points in $P$, $\gg n^{3/2}$ distinct lines between pairs of its points, in $\gg n^{3/4}$ distinct  directions.

Besides, $P=A\times A$ determines  $\gg n^{9/16}$ distinct pair-wise distances.

These estimates are obtained on the basis of a new plane geometry interpretation of the  incidence theorem between points and planes in three dimensions, which we call collisions of images.
\end{abstract}

\maketitle

\section{Introduction}
\label{sec:introduction}
In this paper we consider several variations of the sum-product problem over fields $F$ with positive odd characteristic $p$ and multiplicative group $F_*$. All our estimates bear  a constraint in terms of $p$, so the latter should be sufficiently large,  in terms of absolute constants hidden in the forthcoming estimates.

The sum-product problem is roughly to show that any set of numbers is to yield almost maximum possible growth under either addition or multiplication.
Erd\H os and Szemer\'edi \cite{ES} posed this problem and showed that if \(A\) is a finite set of integers, then
\begin{equation}\label{sprod} \max\{|A+A|,|AA|\}\gg |A|^{1+\epsilon},\end{equation}
for some \(\epsilon > 0\), with a small value implicit in their paper.  They conjectured that the above lower bound should hold for any \(\epsilon < 1\).

A significant improvement was made by Elekes \cite{E}, who vindicated  \(\epsilon=1/4\) for \(A\subset\mathbb R\), having brought the Szemer\'edi-Trotter incidence theorem into play.
The current standing ``world record'' over the reals is due to  Konyagin and the fourth listed author \cite{KS}, with \(\epsilon=1/3+\delta\) for some rather small constant \(\delta > 0\), having build upon and added $\delta$ to the result of the construction of Solymosi \cite{So}. The latter construction was also shown to extend to the complex field by Konyagin and the third listed author \cite{KR}.

All the aforementioned results are based on techniques that exploit the order properties of \(\mathbb{R}\)  in a crucial way.

\medskip
The sum-product phenomenon in the positive characteristic case in the context of the prime residue field \(F_p\) was posed and studied in some detail by Bourgain, Katz and Tao \cite{BKT} who proved the existence of some uniform \(\epsilon > 0\) as to inequality \eqref{sprod}, provided that the cardinality $|A|$ of $A\subset F_p$ is sufficiently small in terms of $p$.  Further developing the so-called {\em additive pivot} technique from the latter work, several quantitative estimates on $\epsilon$ have been obtained by various authors, the best one vindicating any $\epsilon<1/11$ in \cite{R0}, see also the references contained therein.

Recently Roche-Newton and the third and fourth listed authors of this paper \cite{RNRS} improved \(\epsilon\) to \(1/5\), using a new geometric method based on a point-plane incidence theorem, due to the third author \cite{R}. This theorem, which we quote as Theorem \ref{mish}, applies over fields of characteristic $p\neq 2$ and carries a constraint on $|A|$ in terms of $p$.

The paper \cite{RNRS} also presented a lower bound \(|A+AA|\gg |A|^{3/2}\), for a sufficiently small $A$. The same bound over the reals follows easily from the Szemer\'edi-Trotter theorem. However, it has only recently been improved  by the fourth listed author \cite{SB}, by some $\delta>0$ adding to the exponent $3/2$.
One of the results here is the matching lower bound of \(|A(A+A)|\gg |A|^{3/2}\). This nearly catches up with the best lower bound over \(\mathbb{R}\), which was pushed slightly beyond \(|A|^{3/2}\) by Roche-Newton and the second and fourth listed authors \cite{MRNS}.

\medskip
This paper proves a variety of new sum-product type estimates in positive characteristic.  They yield a considerable improvement to the state of the art, that arose from a series of questions and estimates by Bourgain, Katz, and Tao \cite{BKT} and further work, e.g., by Bourgain \cite{Bo} and  Bukh and Tsimerman \cite{BT}, see also the references contained therein. These estimates concern, in particular, the minimum cardinality of the sets of values of various polynomials, with several variables in a given set $A$.

Some of these estimates can be interpreted geometrically, as to the point set $P=A\times A$ in the plane  $F^2$, and we obtain rather strong Szemer\'edi-Trotter type theorem for the number of incidences of $P$ with a set of lines in $F^2$.

The thrust of this paper is applying the three-dimensional point-plane incidence Theorem \ref{mish}. The methodological novelty, in comparison to earlier applications of the theorem in, for instance, \cite{RNRS}, \cite{Z}, \cite{BW}, is that we find an efficient way of interpreting the  above incidence in three dimensions  as what we call a {\em collision} in the plane. It turns out that all of the problems discussed in this paper can be cast as event counts of the form $\ell(x)=\ell'(x')$, where $\ell,\ell'$ are lines in some set of lines $L$ in $F^2$ and the abscissae $x,x'$ are in some subset $A$ of  $F$. Such an event gets easily interpreted as an incidence between a point and a plane in space. We thereby obtain estimates, which may be in some sense regarded as positive characteristic analogues of the estimates obtained over the reals via the Szemer\'edi-Trotter theorem. Our estimates are somewhat weaker, for Theorem \ref{mish} is certainly weaker than the Szemer\'edi-Trotter theorem. However, some of them are not too far off the best known results over the reals, for arithmetic applications of the Szemer\'edi-Trotter theorem itself seldom give one optimal results.

We  present a summary of the main results after setting up the notation in the next section.

\subsection{Notation}
\label{sec:notation}
 Throughout this note $F$ is a field of positive odd characteristic $p$, with the multiplicative group $F_*=F\setminus\{0\}$, the sets we are dealing with are finite and non-empty.

Let  $A,B\subset F$ have cardinality $|A|,|B|>1$.  The sum set is defined as
$$
A+B = \{a+b\colon\, a\in A,\,b\in B\}.
$$
Similarly one defines the difference set $A-B$, the product set $AA$, as well as other sets arising from taking algebraic combinations of elements of $A$.
We use the notation $A/B$ for the ratio set, in which case we mean
$$
A/B = \{a/b\colon\, a\in A,\,b\in B\setminus\{0\}\}.
$$
Similarly, to avoid dividing by zero, we later use the notation
 $$(A+B)^{-1} = \{(a+b)^{-1}\colon\,a\in A,\,b\in B,\, a+b\neq 0\}.$$
Besides, for an integer $d$, when we use the power notation $A^d$ for
$$
A^d=\{a^d\colon a\in a\}.
$$

The notation $\E(A,B)$ stands for the so-called additive energy of two sets $A,B \subset F_*$, i.e.,
$$
    \E(A,B) = |\{ a_1+b_1 = a_2+b_2 ~:~ a_1,a_2 \in A;\, b_1,b_2 \in B \}| \,.
$$
If $A=B$ one writes $\E(A)$ instead of $\E(A,A)$,
 calling it the additive energy of $A$.  Estimating the  quantity  $\E(A,B)$ from above is useful, for by the Cauchy-Schwarz inequality on has
 $$
 |A\pm B|\geq \frac{|A|^2|B|^2}{\E(A,B)}.
 $$
 The multiplicative energy $\E^\times$ is defined in the same way, multiplication replacing addition.  We will be using many second moment quantities of energy type, denoted invariably as $\E$ throughout.

We use the standard asymptotic notation. In particular, the symbols $\ll$, $\gg,$ suppress absolute constants in inequalities, as well as respectively do the symbols $O$ and $\Omega$.  $C,\epsilon,\delta$ denote positive constants which may vary from one use to another, the latter two being $<1$ and usually quite small.

\section{Statement of Results}
\label{sec:statement-results} We have attempted to partition our results into three sections. 

These results, by inspection of their proofs, can be easily adapted to several sets $A,B,C,\ldots$ being involved, but for just $A$. In order to demonstrate how different sets come in, we have chosen to formulate the forthcoming Proposition \ref{prop:3} in terms of two sets. Besides, unless stated to the contrary, $A+A$ can be replaced by $A-A$, as well as independently $AA$ by $A/A$. In the following statements we assume $A,B,C \subset F_*$.

\subsection{Sum-product results}
\label{sec:sum-product-results}

Claims in this section extend and improve on earlier results along these lines in \cite{RNRS}.

\begin{proposition}
  \label{prop:2}
For $|A|<p^{3/5}$ one has \[ |A+A|^3|AA|^2\gg |A|^6.\]
\end{proposition}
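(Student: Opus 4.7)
The plan is to deduce Proposition \ref{prop:2} from the multiplicative-energy bound
$$E^\times(A) \;\ll\; |A|\cdot|A+A|^{3/2},$$
which we expect to hold under the hypothesis $|A|<p^{3/5}$. Given this, Cauchy--Schwarz applied to the fibres of the projection $(a,b)\mapsto ab$ gives $|AA|\cdot E^\times(A)\geq |A|^4$, hence $|AA|\gg |A|^3/|A+A|^{3/2}$. Squaring is exactly the claimed $|A+A|^3|AA|^2\gg |A|^6$.

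To establish the energy bound in the collisions framework, I would rewrite
$$E^\times(A)=|\{(a,b,c,d)\in A^4\,:\,ab=cd\}|$$
using the additive substitution $s=a-c$, $t=b-d$ (both lying in $A-A$). The identity $ab=cd$ then becomes the \emph{linear} relation $sd+tc+st=0$ on $(c,d)$, i.e.\ the incidence of $(c,d)$ with the line $L_{s,t}\colon X/(-s)+Y/(-t)=1$; the side conditions $c\in A\cap(A-s)$ and $d\in A\cap(A-t)$ are recorded through the weights $r_{A-A}(s)$ and $r_{A-A}(t)$. Up to a diagonal contribution of size $O(|A|^2)$, this recasts $E^\times(A)$ as a weighted count of collisions $\ell(x)=\ell'(x')$ of the type central to this paper.

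The next step is to encode this weighted planar collision count as a point-plane incidence count in $F^3$, using one of the shift parameters (say $s$) as the auxiliary third coordinate so that the family $\{L_{s,t}\}$ traces out planes. Applying Theorem \ref{mish} then furnishes an upper bound on the total incidence count. A subsequent Cauchy--Schwarz on the weights $r_{A-A}(\cdot)$, together with the identity $\sum_s r_{A-A}(s)=|A|^2$ and the lower bound $E^+(A)\geq |A|^4/|A+A|$, converts this bound into the desired $|A+A|^{3/2}$-shaped estimate; Pl\"unnecke--Ruzsa is used freely to interchange $A+A$ and $A-A$.

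The principal technical obstacle is the careful bookkeeping of the weighted collision sum so that Theorem \ref{mish} applies with a balanced point and plane set and with the maximum number of collinear points in the configuration kept under control (expected to be $O(|A|)$). The quantitative threshold $|A|<p^{3/5}$ is dictated by the non-degeneracy hypothesis $|P|\ll p^2$ of Theorem \ref{mish}, which for the natural point set constructed here corresponds precisely to this range of $|A|$.
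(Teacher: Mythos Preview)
Your high-level strategy---prove $\E^\times(A)\ll |A|\,|A+A|^{3/2}$ and then apply Cauchy--Schwarz---is exactly what the paper does. The divergence, and the gap, is in how you propose to obtain the energy bound.

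The substitution $s=a-c$, $t=b-d$ turns $ab=cd$ into $sd+tc+st=0$. This is \emph{not} a collision equation in the paper's sense: you have a single line $L_{s,t}$ and a single point $(c,d)$, i.e.\ a point--line incidence, not an equation of the form $\ell(x)=\ell'(x')$. More importantly, $sd+tc+st=0$ fails to be linear in any three of its four variables: whichever variable you freeze, a bilinear cross-term survives among the remaining three. Consequently there is no way to encode it as a point--plane incidence to which Theorem~\ref{mish} applies. Your assertion that the family $\{L_{s,t}\}$, lifted using $s$ as a third coordinate, ``traces out planes'' is not correct: for fixed $t$ the locus $\{(c,d,s):sd+tc+st=0\}$ is a quadric surface, not a plane. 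The subsequent steps (Cauchy--Schwarz on the weights $r_{A-A}$, Pl\"unnecke--Ruzsa) therefore have nothing to act on.

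The paper's device is different: instead of reducing variables, it \emph{introduces} two auxiliary ones, writing $ab=(a+c)b-cb$ with $c\in A$ on each side. After relaxing $a+c$ to $s\in A+A$, the energy is bounded by $|A|^{-2}$ times the number of solutions of
\[
sb-cb \;=\; s'b'-c'b',\qquad b,b',c,c'\in A,\;\; s,s'\in A+A,
\]
which is precisely $\E(L,A+A)$ for the line set $L=\{\ell_{b,c}(x)=bx-cb:\,b,c\in A\}\simeq L_{A\times A}$. This six-variable equation \emph{is} linear in any three variables once the other three are fixed, so Theorem~\ref{main} (hence Theorem~\ref{mish}) applies directly and yields $\E(L,A+A)\ll |A|^3|A+A|^{3/2}$, giving the desired $\E^\times(A)\ll |A|\,|A+A|^{3/2}$. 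The threshold $|A|<p^{3/5}$ then emerges from the constraint $|A|^2|A+A|<p^2$ together with the observation that the proposition is trivial once $|A+A|>|A|^{4/3}$.
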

The same estimate, with the roles of multiplication and addition reversed was proved in \cite{RNRS}.

The next statement is formulated in terms of two sets $A,B$.
\begin{proposition}
\label{prop:3}
Let $A,B\subset F_*$, with $|B|\leq |A|\leq |B|^2$, and $|A|^{1/3}|B|^{4/3}<p$. For  $(\alpha,\beta)\in F^2\setminus\{0,0\}$,  one has \[ |(A+\alpha)/(B+\beta)|^2 |A/B|^3\gg |A|^4|B|^2.\]
\end{proposition}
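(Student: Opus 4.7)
The plan is to realize $|R|^2|T|^3$ as controlling the number $Q$ of ``rectangles'' in a bipartite incidence structure attached to $A$, $B$, and the shift $(\alpha,\beta)$, and then to bound $Q$ below by Cauchy-Schwarz and above by Theorem \ref{mish}.

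Set $\tau := |A/B|$ and $\rho := |(A+\alpha)/(B+\beta)|$. For $(a,b) \in A \times B$ write $t(a,b) := a/b$, $s(a,b) := (a+\alpha)/(b+\beta)$, with representation functions $r(t)$ and $r'(s)$ counting the number of $(a,b)$ mapping to each value. Place $P := B \times A$ in $F^2$: then $r(t)$ counts points of $P$ on the line through the origin of slope $t$, while $r'(s)$ counts points of $P$ on the line of slope $s$ through $v_0 := (-\beta,-\alpha)$. Cauchy-Schwarz gives $\sum_t r(t)^2 \geq |A|^2|B|^2/\tau$ and $\sum_s r'(s)^2 \geq |A|^2|B|^2/\rho$. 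Let $Q$ count the 4-tuples $(t_1,t_2,s_1,s_2) \in T^2 \times R^2$ for which all four intersection points $p(t_i,s_j)$ of the slope-$t_i$ origin-line with the slope-$s_j$ $v_0$-line lie in $P$. Writing $Q = \sum_{t_1,t_2} g(t_1,t_2)^2$ with $g(t_1,t_2) := |\{s : p(t_1,s), p(t_2,s) \in P\}|$, and noting $\sum_{t_1,t_2} g = \sum_s r'(s)^2$, a further Cauchy-Schwarz over $T^2$ yields the lower bound $Q \gg |A|^4|B|^4/(\tau^2 \rho^2)$.

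For the upper bound I invoke the paper's ``collisions of images'' framework. For each $b \in B$ the map $f_b(t) := (tb+\alpha)/(b+\beta)$ is affine-linear in $t$, and the condition $(t_i,s_j) \in V$ (where $V$ is the image of $A \times B$ in $T \times R$ under $(a,b)\mapsto(t,s)$) amounts to the existence of $b_{ij}\in B$ with $f_{b_{ij}}(t_i)=s_j$ and $t_ib_{ij} \in A$. A rectangle is thus encoded by two pairs of simultaneous collisions $f_{b_{1j}}(t_1) = f_{b_{2j}}(t_2) = s_j$ for $j=1,2$. Each such collision $f_b(t) = f_{b'}(t')$ unfolds into the bilinear identity $(tb+\alpha)(b'+\beta) = (t'b'+\alpha)(b+\beta)$; introducing an auxiliary coordinate to linearize, the count $Q$ becomes a point-plane incidence count in $F^3$, with points parametrized by triples drawn from the ``first row'' flags and planes parametrized by the ``second row'' flags. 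Each parametrizing set has cardinality $O(|A||B|^2)$, which stays below $p^2$ under $|B| \leq |A| \leq |B|^2$ and $|A|^{1/3}|B|^{4/3} < p$ (the latter is precisely what forces $|A||B|^2 < p^2$ over this range). Theorem \ref{mish} then yields $Q \ll \tau|B|^2$, after absorbing lower-order terms using the range hypotheses. Combining $|A|^4|B|^4/(\tau^2\rho^2) \ll Q \ll \tau|B|^2$ gives $\rho^2\tau^3 \gg |A|^4|B|^2$, as required.

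The main obstacle will be the point-plane encoding in the upper-bound step: choosing the triples serving as points and planes in $F^3$ so that the pair of bilinear rectangle conditions becomes, after the linearizing substitution, a single affine incidence relation between the three point-coordinates and the three plane-parameters. One must also verify that the maximum number of planes sharing a common line is controlled, since it is this structural control --- not available in a black-box application of Theorem \ref{mish} --- that produces the main term of order $\tau|B|^2$ rather than the generically weaker $(|A||B|^2)^{3/2}$ that the theorem would give without the additional structure coming from the collision geometry.
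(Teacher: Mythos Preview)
Your lower bound on $Q$ via two applications of Cauchy--Schwarz is fine, but the upper bound $Q \ll \tau|B|^2$ is not established, and this is where the argument breaks down. You parametrize points and planes by sets of size $O(|A||B|^2)$ and invoke Theorem~\ref{mish}; that theorem, however, says exactly $I(Q,\Pi)\ll n\sqrt{m}+kn$, and with $m=n=|A||B|^2$ this gives at best $(|A||B|^2)^{3/2}$, which is far too weak. Your claim that ``structural control'' on the maximum number of coplanar lines improves the \emph{main} term $n\sqrt{m}$ has no basis in the statement of Theorem~\ref{mish}: the collinearity parameter $k$ only enters the second term $kn$. There is no ``non-black-box'' version of the theorem in the paper that yields a smaller main term from extra structure. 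Moreover, a rectangle in your sense encodes \emph{two} independent collision constraints (one for each $s_j$), and you have not shown how to compress both into a single affine incidence in $F^3$; you yourself flag this as ``the main obstacle'' without resolving it. As it stands the proof is a plan rather than a proof.

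For comparison, the paper's argument is considerably more direct and avoids the rectangle count entirely. One bounds the single energy
\[
\E=\left|\left\{(a,a',b,b')\in A^2\times B^2:\ \frac{a+\alpha}{b+\beta}=\frac{a'+\alpha}{b'+\beta}\right\}\right|,
\]
expands the cross-multiplied identity, and then substitutes $a=e'd'$, $a'=ed$ with $e,e'\in A/B$ and $d,d'\in B$. This ``multiply by $1$'' trick introduces two extra variables and recasts the count (up to a factor $|B|^{-2}$) as a genuine collision quantity $\E(L,A/B)$, where $L$ is indexed by the $|B|\times|B|$ grid $\{((b+\beta)d,\,b\alpha):b,d\in B\}$. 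Now a \emph{single} application of Theorem~\ref{main} gives $\E\ll |B|\,|A/B|^{3/2}$, and Cauchy--Schwarz in the form $|(A+\alpha)/(B+\beta)|\geq |A|^2|B|^2/\E$ finishes the job. The key idea you are missing is this substitution step, which converts the four-variable energy into a six-variable collision count to which the image-set machinery applies directly.
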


We also combine results from Proposition \ref{prop:2} with those in \cite{RNRS}.
\begin{corollary}
  \label{cor:5}
One has the estimates
  \begin{enumerate}
\item If $|A|<p^{2/3}$,  then \[ |A+AA|,\, \;|A(A+ A)| \gg |A|^{3/2}.\]
\item If $|A|<p^{5/8}$, then \[ |AA+AA|+|(A+A)(A+A)| \gg |A|^{8/5}.\]
  \end{enumerate}
\end{corollary}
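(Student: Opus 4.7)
My plan is to derive the four bounds by pairing Proposition~\ref{prop:2} with the ``dual'' sum-product estimate $|AA|^3|A+A|^2\gg|A|^6$ established in \cite{RNRS} (with $+$ and $\times$ interchanged relative to Proposition~\ref{prop:2}). The bounds $|A+AA|\gg|A|^{3/2}$ in (1) and $|AA+AA|\gg|A|^{8/5}$ in (2) I would simply import from \cite{RNRS}, where they are derived from the dual. The new content of the corollary is therefore the companion pair $|A(A+A)|\gg|A|^{3/2}$ and $|(A+A)(A+A)|\gg|A|^{8/5}$, which I would obtain by rerunning the \cite{RNRS} derivations with $+$ and $\times$ swapped, invoking Proposition~\ref{prop:2} in place of the \cite{RNRS} dual.

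Concretely, for $|A(A+A)|$ I would set $r(t)=|\{(a,a_1,a_2)\in A^3\colon a(a_1+a_2)=t\}|$ and apply Cauchy--Schwarz to $\sum_t r(t)=|A|^3$ to obtain
$$|A(A+A)|\;\geq\; \frac{|A|^6}{\sum_t r(t)^2}.$$
The sum $\sum_t r(t)^2$ counts $6$-tuples in $A^6$ with $a(a_1+a_2)=a'(a_1'+a_2')$. Reparametrising by the multiplicative ratio $\lambda=a'/a\in A/A$ turns this into a weighted sum over $\lambda$ of additive-energy-type quantities on multiplicative rescalings of $A+A$, and Proposition~\ref{prop:2} (through the lower bound it imposes on $|A+A|$ whenever $|AA|$ is small, and vice versa) forces the total to be $\ll|A|^{9/2}$, yielding $|A(A+A)|\gg|A|^{3/2}$.

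The argument for $|(A+A)(A+A)|$ follows the same pattern, with $A+A$ taking the role of the multiplier in the Cauchy--Schwarz step; the required upper bound on the relevant multiplicative energy of $A+A$ comes from combining Proposition~\ref{prop:2} with a Pl\"unnecke--Ruzsa passage transferring sum-product information from $A$ to $A+A$. This is the $+/\times$ analogue of the \cite{RNRS} derivation of $|AA+AA|\gg|A|^{8/5}$ from the dual inequality. The $p$-constraints $|A|<p^{2/3}$ and $|A|<p^{5/8}$ record the regimes in which Theorem~\ref{mish}, entering through Proposition~\ref{prop:2} and its dual, may be invoked at the various stages of the argument, the stricter constraint in (2) coming from the extra Pl\"unnecke step.

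The hard part will be the bookkeeping in the double Cauchy--Schwarz and dyadic pigeonhole step, specifically ensuring that the $6$-tuple count $\sum_t r(t)^2$ disperses sufficiently over the ratio parameter $\lambda$ for Proposition~\ref{prop:2} to bite efficiently. Since \cite{RNRS} already carries out exactly this bookkeeping for the dual inequalities, and Proposition~\ref{prop:2} plays a structurally identical role to the \cite{RNRS} dual, I expect these calculations to transfer with only cosmetic changes.
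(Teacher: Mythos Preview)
Your plan has a genuine gap: you are trying to extract Corollary~\ref{cor:5} from the sum-product inequality of Proposition~\ref{prop:2}, but that is not how the paper proceeds, and your proposed route does not reach the target bounds. For part~(a) the paper does not use Proposition~\ref{prop:2} at all. Both $|A+AA|\gg|A|^{3/2}$ and $|A(A+A)|\gg|A|^{3/2}$ are immediate from Corollary~\ref{cor:1} (equivalently, from the image-set bound \eqref{sizest}) with $A=B=C$, applied to the line families $\{y=bx+c\}$ and $\{y=b(x+c)\}$ respectively. The six-tuple count $\sum_t r(t)^2$ you write down is exactly the collision energy $\E(L,A)$ of equation~\eqref{eq:2}, and Theorem~\ref{main} bounds it by $|A|^{9/2}$ directly via Theorem~\ref{mish}; no sum-product inequality is involved. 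Your ratio decomposition $\sum_{\lambda}r_{A/A}(\lambda)E_\lambda$ does not close: the available bounds $E_\lambda\le\E(A)$ and $\sum_\lambda r_{A/A}(\lambda)=|A|^2$ give only $N\le|A|^2\E(A)$, so you would need $\E(A)\ll|A|^{5/2}$, which neither Proposition~\ref{prop:2} nor its \cite{RNRS} dual provides. The claim that ``\cite{RNRS} already carries out exactly this bookkeeping'' is also not right: \cite{RNRS} obtains $|A+AA|\gg|A|^{3/2}$ from the point-plane incidence bound directly, not by post-processing the inequality $|AA|^3|A+A|^2\gg|A|^6$.

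For part~(b) the paper again starts from Corollary~\ref{cor:1}, now applied with $(B,A,C)=(A+A,A,A)$ and $(A,A,AA)$, obtaining $|(A+A)(A+A)|\gg|A|\sqrt{|A+A|}$ and $|AA+AA|\gg|A|\sqrt{|AA|}$. Only at this point does sum-product input enter, via \eqref{speq}: since $\max(|A+A|,|AA|)\gg|A|^{6/5}$, one of the two square roots is at least $|A|^{3/5}$, giving $|A|^{8/5}$ for the corresponding set. Your alternative, bounding $\E^\times(A+A)$ by applying the energy estimate \eqref{eq:10} to $A+A$ and then using Pl\"unnecke, yields only $|(A+A)(A+A)|\gg|A|^{3/2}K^{-3}$ with $K=|A+A|/|A|$, which never reaches $|A|^{8/5}$. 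The missing ingredient throughout is Corollary~\ref{cor:1}: the point of the paper's ``collisions'' framework is precisely that $A(A+A)$ (and $B(A+C)$ generally) is an image set of a grid-like line family, so the six-tuple count is a point-plane incidence count and Theorem~\ref{mish} applies directly.
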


The following Proposition \ref{prop:1} in the special case $d=1$ was proven in \cite{RNRS}.
For  $d>1$, it is a considerable  improvement in a particular case of Theorem 3  of Bukh and Tsimerman \cite{BT}.
\begin{proposition}
  \label{prop:1}
For $|A|<p^{3/5}d^{1/5}$ and an integer $0<d<|A|$, either $|AA|\gg
|A|^2/d$, or
\[
|AA|^3|A^d+A^d|^2\gg |A|^6/d.
\]
\end{proposition}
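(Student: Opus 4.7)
The plan is to follow the template of \cite{RNRS}, which handles the case $d=1$ via the point--plane incidence theorem (Theorem \ref{mish}). In that proof one establishes the equivalent multiplicative energy bound $\E^\times(A) \ll |A|^2 |A+A|^{2/3}$ (from which $|AA|^3|A+A|^2\gg |A|^6$ follows through the Cauchy--Schwarz inequality $|AA|\geq |A|^4/\E^\times(A)$). For general $d$, I would aim for the analogue
$$
\E^\times(A) \ll d^{1/3}\, |A|^2\, |A^d + A^d|^{2/3},
$$
from which $|AA|^3|A^d+A^d|^2\gg |A|^6/d$ follows; the alternative $|AA|\gg |A|^2/d$ arises when the ``degenerate'' term in the point--plane incidence bound dominates.

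The central quantity to bound is the $6$-fold collision count
$$
N := \#\bigl\{(a_1,a_2,b_1,b_2,c_1,c_2)\in A^6 \,:\, a_1(b_1^d+c_1^d) = a_2(b_2^d+c_2^d)\bigr\},
$$
which couples multiplicative ratios $a_1/a_2 \in A/A$ with multiplicative collisions between elements of $A^d+A^d$. By separating the ratio $\lambda=a_1/a_2$ and invoking $\E(A^d)\geq |A|^4/|A^d+A^d|$ for the diagonal $\lambda=1$ contribution, one gets a lower bound on $N$ of the shape $\E^\times(A)\cdot \E(A^d)/|A|^2$ or similar (via a Cauchy--Schwarz/H\"older splitting). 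On the other hand, $N$ is exactly the number of incidences between points and planes in $F^3$: writing the equation as $a_1b_1^d - a_2b_2^d = a_2 c_2^d - a_1 c_1^d$, the plan is to take a point set $P$ parametrized by $(a_1,b_1,c_1)\in A^3$ via an embedding such as $(a_1,\, a_1 b_1^d,\, a_1 c_1^d)$ and a plane set $\Pi$ parametrized by $(a_2,b_2,c_2)\in A^3$, chosen so that incidence is genuinely linear in each side.

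Applying Theorem \ref{mish} then yields a bound of the form $N \ll |A|^{9/2} + k\,|A|^3$, where $k$ is the maximum number of points of $P$ on a common line. The key estimate is $k\ll d|A|$: along a generic line in $P$, the third coordinate $a_1 c_1^d$ forces $c_1$ into at most $d$ preimages per value (since the $d$-th power map is at most $d$-to-one), while the remaining ``$a_1$-direction'' can carry $|A|$ points. Combining the two bounds on $N$ and tracking which of $|A|^{9/2}$ and $d|A|^4$ dominates produces the dichotomy, and the characteristic constraint $|A|<p^{3/5}d^{1/5}$ is forced by the hypothesis $|P|\ll p^2$ of Theorem \ref{mish} after accounting for the $d$-dependence in the point count. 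The main obstacle is the linearization step in the point--plane setup: one must simultaneously make the incidence literally linear in both sides, keep $|P|,|\Pi|$ at the scale $|A|^3$, and argue $k\ll d|A|$ rather than the more naive $d^2|A|$; this last point is what allows the $1/d$ (instead of $1/d^2$) factor in the final inequality and is where the symmetric role of the pair $b_i,c_i$ in the defining equation must be exploited carefully.
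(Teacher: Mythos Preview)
Your proposal has the orientation backwards, and this creates gaps that cannot be patched.

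First, the template from \cite{RNRS} for $d=1$ is \emph{not} a bound on $\E^\times(A)$; it is the additive energy bound $\E(A)\ll |A|\,|AA|^{3/2}$, from which $|AA|^3|A+A|^2\gg |A|^6$ follows via $|A+A|\ge |A|^4/\E(A)$. (The bound $\E^\times(A)\ll |A|^2|A+A|^{2/3}$ you quote is not what is proved there; incidence arguments naturally yield the exponent $3/2$, not $2/3$.) The paper's proof of the present proposition generalises that same direction: it bounds the additive energy
\[
\E_d(A)=\#\{(a,b,c,e)\in A^4:\,a^d+b^d=c^d+e^d\}
\]
in terms of $|AA|$, via the identity $b^d=(bf)^d/f^d$ with $f\in A$, so that one may relax $bf\mapsto h\in AA$. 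The resulting equation $a^d+h^d/f^d=c^d+k^d/g^d$ is genuinely linear in the point $(a^d,1/f^d,k^d)$ and in the plane covector determined by $(h,g,c)$, so Theorem~\ref{mish} applies cleanly.

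Second, and more seriously, your step linking $N=\#\{a_1(b_1^d+c_1^d)=a_2(b_2^d+c_2^d)\}$ to $\E^\times(A)$ does not hold. Writing $N=\sum_\lambda r_{A/A}(\lambda)\,r_S(\lambda)$ with $S=A^d+A^d$, one sees that $N$ is a correlation, not a second moment of $A/A$; there is no Cauchy--Schwarz or H\"older inequality that yields $N\gtrsim \E^\times(A)\cdot \E(A^d)/|A|^2$. The diagonal $\lambda=1$ contributes only $|A|\cdot\E_d(A)$, which carries no information about $\E^\times(A)$. So even granting a good upper bound on $N$, you would obtain only a lower bound on $|A(A^d+A^d)|$, not the desired product of $|AA|$ and $|A^d+A^d|$.

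Third, your proposed point embedding $(a_1,\,a_1b_1^d,\,a_1c_1^d)$ makes the incidence condition $y+z=\mathrm{const}$, so all the ``planes'' are parallel and $k$ is as large as the whole point set; this is exactly the obstruction you flag but do not resolve. In the paper's approach no such issue arises, because the three coordinates of the point lie in three independent coordinate sets, and the maximum number of collinear points is simply $\max(|A|,|AA|)=|AA|$.

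Finally, the factor of $d$ in the paper's proof does not come from a bound $k\ll d|A|$ on collinear points. It enters through the \emph{weighted} incidence theorem (Theorem~\ref{mishw}): the map $a\mapsto a^d$ is at most $d$-to-one, so the points (and planes) carry weights bounded by $d$, producing a $\sqrt{d}$ loss in the main term and the $d|AA|^2$ second term. It is this second term that forces the dichotomy $|AA|\gg |A|^2/d$.
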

Note that the second inequality in Proposition \ref{prop:1} cannot hold just under the constraint $d=O(|A|)$.
For example, suppose $A$ is a multiplicative subgroup of $F_*$ of rank $d$. Then the second inequality alone is false.

\subsection{Line geometry estimates}
In this section we summarise new results in geometry of lines defined by the plane point set $P=A\times A$.
Let $\T(A)$ denote the number of \emph{collinear triples} of points of $P$. Let also
$$ R(A) := \left\{\frac{(a-b)(c-d)}{(a-c)(b-d)}:\;a,b,c,d\in A\right\}\;\subset FP^1$$
be the set of cross-ratios defined by $A$, viewed as the subset of the projective line $FP^1$, and $R_\infty(A)$ the same set pinned at $c=\infty$.

The next proposition is the  main result in this section.

\begin{proposition}
\label{prop:7}
 If $|A|\ll p^{2/3}$ then $\T(A)\ll |A|^{9/2}$.
\end{proposition}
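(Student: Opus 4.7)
The plan is to express $T(A)$ as a point--plane incidence count in $F^3$ and then invoke Theorem~\ref{mish}. Writing out the collinearity of three points $(a_i, b_i) \in A\times A$ via the determinant relation
\[
a_1(b_2 - b_3) + a_2(b_3 - b_1) + a_3(b_1 - b_2) = 0,
\]
one sees that for each triple $(b_1, b_2, b_3) \in A^3$ this is the equation of a plane through the origin in $F^3$ on which the point $(a_1, a_2, a_3) \in A^3$ must lie. Hence $T(A) = I(Q, \Pi)$, where $Q = A^3 \subset F^3$ and $\Pi$ is the multi-set of $|A|^3$ planes indexed by $(b_1, b_2, b_3) \in A^3$, so that $|Q| = |\Pi| = |A|^3$.

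Under the hypothesis $|A| \ll p^{2/3}$ one has $|A|^3 \ll p^2$, placing the problem in the regime of applicability of Theorem~\ref{mish}. The main term of the point--plane bound is of order $|Q|^{1/2}|\Pi| = |A|^{9/2}$, which is exactly the target estimate.

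The hard part will be controlling the error term of Theorem~\ref{mish}, which is governed by the maximum total multiplicity of planes in $\Pi$ passing through a common line of $F^3$. Every plane in $\Pi$ contains the origin, so only lines through the origin matter; for such a line the planes in $\Pi$ containing it form a one-parameter family within the $2$-plane of admissible normals $\{(n_1, n_2, n_3): n_1 + n_2 + n_3 = 0\}$. A naive count of those $(b_1, b_2, b_3) \in A^3$ whose normal lies in this family yields only $O(|A|^2)$, falling short of what is needed by a factor of $|A|^{1/2}$. The gap is closed by exploiting the diagonal translation symmetry $(b_1, b_2, b_3) \mapsto (b_1 + c, b_2 + c, b_3 + c)$, which leaves the plane invariant: dyadically partitioning $\Pi$ by orbit size (equivalently, by plane multiplicity) and applying Theorem~\ref{mish} level-by-level redistributes the weight so that the error is absorbed into $|A|^{9/2}$.

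A cleaner alternative route I would also keep in mind is to first apply Cauchy--Schwarz,
\[
T(A)^2 \;\leq\; \Big(\sum_\ell n_\ell^2\Big)\Big(\sum_\ell n_\ell^4\Big) \;\ll\; |A|^4 \sum_\ell n_\ell^4,
\]
where $n_\ell = |\ell \cap (A\times A)|$, and reduce the problem to the collinear-quadruple count $\sum_\ell n_\ell^4 \ll |A|^5$, which admits an analogous but cleaner point--plane encoding (two extra degrees of freedom allow the multiplicity bookkeeping to be separated from the incidence bound).
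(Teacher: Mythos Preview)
Your encoding has a structural degeneracy that your proposed fixes do not remove. Since every admissible normal $(b_2-b_3,\,b_3-b_1,\,b_1-b_2)$ lies in the hyperplane $n_1+n_2+n_3=0$, \emph{every} plane in $\Pi$ contains the main diagonal $\{(t,t,t):t\in F\}$. You analyse only generic lines through the origin and correctly find that at most one plane direction (with multiplicity $O(|A|^2)$) can contain such a line; but for the diagonal itself, \emph{all} of $\Pi$ contains it. Theorem~\ref{mish} is stated for sets, so after passing to distinct planes you must dualise (there are at most $|A|^3$ of them, and typically fewer than $|Q|=|A|^3$); in the dual the parameter $k$ becomes the maximum number of planes through a common line, which is the full set. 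The error term then swamps the main term. Your dyadic decomposition by plane multiplicity addresses a different issue (several $b$-triples giving the same plane) and does nothing about this collinearity of the dual points. The Cauchy--Schwarz reduction to collinear quadruples makes things harder, not easier: showing $\sum_\ell n_\ell^4\ll |A|^5$ is strictly stronger than $\T(A)\ll|A|^{9/2}$, and a naive point--plane encoding of the quadruple count has the same diagonal degeneracy.

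The paper's route is, in effect, to quotient out exactly this diagonal. Writing the collinearity as
\[
\frac{a-b}{c-b}=\frac{a'-b'}{c'-b'}
\]
(equivalently, substituting $u_1=a_1-a_3$, $u_2=a_2-a_3$ in your determinant kills the $a_3$-variable and the diagonal direction with it) turns $\T(A)$ into a collision count $\ell_{c,b}(a)=\ell_{c',b'}(a')$ with $\ell_{c,b}(t)=(t-b)/(c-b)$. This is $\E(L,A)$ for a set $L$ of $|A|^2$ lines whose covector set has at most $|A|$ collinear points, so Theorem~\ref{main} applies cleanly with $k\le|A|$ and gives $\T(A)\ll|A|^{9/2}$ directly. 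The moral: the diagonal symmetry you noticed is precisely what must be factored out \emph{before} invoking the point--plane bound, not patched afterwards.
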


Its implications are as follows.
\begin{corollary}\label{cor:n} If $|A|\ll p^{2/3}$, the set $P=A\times A$  be a $n$-point set in $F^2$. Then \begin{enumerate}
\item $P$ determines $\Omega(n^{3/2})$ distinct lines between pairs of its points, in $\Omega(n^{3/4})$ distinct directions.  For every $q\in P$, but possibly one point, there are $\Omega(n^{6/10})$ distinct lines supporting $q$ and some other point of $P$.
\item $P$ forms $O(n^{3/4}m^{2/3}+m)$ incidences with a set of $m$ lines in $F^2$.
\item The  set $A\in FP^1$ determines $\Omega(|A|^{3/2})$ distinct cross-ratios, pinned at any point of $A$.\end{enumerate}\end{corollary}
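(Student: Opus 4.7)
All three items reduce, via standard Cauchy--Schwarz/H\"older arguments together with Proposition~\ref{prop:3} and the ``collisions of images'' method, to Proposition~\ref{prop:7}, i.e., the bound $\T(A)\ll n^{9/4}$ with $n=|A|^2$.

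For item (1), let $\{n_i\}$ enumerate $|P\cap\ell_i|$ over lines $\ell_i$ meeting $P$ in at least two points, so that $\sum_i\binom{n_i}{2}=\binom{n}{2}$ and $\sum_i\binom{n_i}{3}\leq\T(A)\ll n^{9/4}$. The Cauchy--Schwarz inequality $(\sum n_i^2)^2\leq(\sum n_i)(\sum n_i^3)$ combined with $\sum n_i^2\gg n^2$ gives $\sum n_i\gg n^{7/4}$, and then $(\sum n_i)^2\leq L\sum n_i^2$ yields $L\gg n^{3/2}$. For the $\Omega(n^{3/4})$ total direction count, write $\T_q^\ast$ for the number of collinear triples of $P$ containing $q$: since $\sum_{q\in P}\T_q^\ast=3\T(A)$, Markov produces a point $q$ with $\T_q^\ast\ll n^{5/4}$, and Cauchy--Schwarz applied to the lines through $q$ (using $\sum n_i=n-1$ and $\sum\binom{n_i}{2}=\T_q^\ast$) forces $\gg n^{3/4}$ distinct lines through $q$, and a fortiori that many directions in $P$. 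For the per-point statement, the distinct directions from $q=(a_0,b_0)\in P$ with $(a_0,b_0)\neq(0,0)$ form the ratio set $(A-b_0)/(A-a_0)$; Proposition~\ref{prop:3} with $A=B$ and $(\alpha,\beta)=(-b_0,-a_0)$ gives $|(A-b_0)/(A-a_0)|^2|A/A|^3\gg|A|^6$, and a case analysis on the size of $|A/A|$ produces $|(A-b_0)/(A-a_0)|\gg|A|^{6/5}=n^{6/10}$, with the possibly-excluded point being the origin.

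For item (2), if $I$ denotes the incidences of $P$ with $m$ lines, H\"older gives $I^3\leq m^2\sum_i n_i^3$, and the identity
\[
\sum_i n_i^3=6\sum\binom{n_i}{3}+6\sum\binom{n_i}{2}+\sum n_i\leq 6\T(A)+6\binom{n}{2}+I\ll n^{9/4}+I
\]
yields $I\ll m^{2/3}n^{3/4}+m$ upon rearrangement. For item (3), the M\"obius substitution $\phi(x)=1/(x-c_0)$ sends the pinning point $c_0\in A$ to $\infty$ and identifies the cross-ratios pinned at $c_0$ with the set $\{(b-d)/(a-d):a,b,d\in A'\}$ where $A'=\phi(A\setminus\{c_0\})$, $|A'|=|A|-1$. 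By Cauchy--Schwarz its cardinality is at least $|A'|^6/E$, with $E$ counting sextuples in $A'^6$ satisfying $(b_1-d_1)(a_2-d_2)=(b_2-d_2)(a_1-d_1)$, equivalently the linear relation $(b_1-d_1)a_2+(d_1-a_1)b_2+(a_1-b_1)d_2=0$. Parametrizing the quantity $\mu=(a-d)/(b-a)$ as the affine-linear image $\ell_{(a,b)}(d)$ of $d$ with coefficients depending on $(a,b)\in A'^2$, the collision $\ell_{(a_1,b_1)}(d_1)=\ell_{(a_2,b_2)}(d_2)$ fits into the ``collisions of images'' framework of the paper and converts $E$ into an incidence count between $|A'|^3$ points and $|A'|^3$ planes in $F^3$; Theorem~\ref{mish} then yields $E\ll|A'|^{9/2}$, so the cross-ratio count is $\gg|A'|^{3/2}\gg|A|^{3/2}$.

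The main obstacle is item (3): the na\"ive point-plane encoding of the cross-ratio collision equation yields a family of planes that all pass through the origin with normals lying in the 2-plane $x+y+z=0$, a degenerate configuration for direct application of Theorem~\ref{mish}. It is precisely the ``collisions of images'' reformulation that allows one to route around this degeneracy and extract the required bound $E\ll|A|^{9/2}$, from which the claimed $|A|^{3/2}$ lower bound on cross-ratios follows.
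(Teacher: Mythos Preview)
Your arguments for items (2) and (3), and for the line and total-direction counts in (1), are correct, though they follow somewhat different paths than the paper's. For lines, the paper uses the single H\"older step $n^2\ll\sum n_\ell^2\leq |\mathcal L(P)|^{1/3}\T(A)^{2/3}$ in place of your two-step Cauchy--Schwarz chain; for the $\Omega(n^{3/4})$ total directions, the paper applies the image set bound (Corollary~\ref{cor:1}) directly to $(A-A)/(A-\alpha)$, rather than your Markov-based pigeonhole; for incidences, the paper converts $\T(A)\ll n^{9/4}$ into the level-set bound $|\{\ell:n_\ell\geq k\}|\ll n^{9/4}/k^3$ and integrates, while your H\"older inequality $I^3\leq m^2\sum n_i^3$ is a cleaner route to the same conclusion. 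For item (3), note that your quantity $E$ is (up to degenerate terms) exactly $\T(A')$, so the entire detour through the collision encoding and Theorem~\ref{mish} simply reproves Proposition~\ref{prop:7} for $A'$; the paper just cites Proposition~\ref{prop:7} and writes $|R_\infty(A)|\geq |A|^6/\T(A)$.

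There is, however, a genuine gap in your per-point argument for item (1). From Proposition~\ref{prop:3} you obtain
\[
|(A-b_0)/(A-a_0)|^2\,|A/A|^3\gg |A|^6,
\]
and you then claim a ``case analysis on $|A/A|$'' yields $|(A-b_0)/(A-a_0)|\gg|A|^{6/5}$. This works only when $|A/A|\leq|A|^{6/5}$; if $|A/A|$ is large the inequality gives you nothing about the other ratio sets, and the origin need not even lie in $P$. The exceptional point in the correct argument is not the origin but the \emph{minimiser} of $|(A-\alpha)/(A-\beta)|$ over $(\alpha,\beta)\in A\times A$. The paper's fix is exactly this: let $(\alpha,\beta)$ attain the minimum $m$; if $m>|A|^{6/5}$ there is nothing to prove, and otherwise translate $A$ so that $(\alpha,\beta)$ becomes the origin and apply the derived estimate \eqref{speq1} (equivalently Proposition~\ref{prop:3}) with $A'=A-\alpha$, $B'=A-\beta$, which forces $|(A'-\alpha')/(B'-\beta')|\gg|A|^{6/5}$ for every $(\alpha',\beta')\neq(0,0)$, i.e., for every point of $P$ other than $(\alpha,\beta)$.
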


As a link to the results in the forthcoming Section \ref{sec:prod-transl} we also have the following.
\begin{corollary}\label{cor:y} If $2\leq |A|\ll p^{2/3}$, then for at least $|A|/2$ values of $a\in A$, 
$$|A(A-a)|\gg |A|^{5/4}.$$\end{corollary}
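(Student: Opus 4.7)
\noindent\textit{Proof plan.} The plan is to establish the sum bound
$$\sum_{a \in A} \E^\times(A, A-a) \ll |A|^{15/4},$$
from which Markov's inequality produces at least $|A|/2$ values of $a \in A$ with $\E^\times(A, A-a) \ll |A|^{11/4}$; the Cauchy-Schwarz lower bound $|A(A-a)| \geq |A|^4/\E^\times(A, A-a)$ then delivers $|A(A-a)| \gg |A|^{5/4}$, as required.

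I would first give $\E^\times(A, A-a)$ a geometric reading. Relabelling $c_1 \leftrightarrow c_2$ in the defining sum shows that it counts ordered pairs of points of $P := A \times A$ collinear with $(a,0)$, hence
$$\E^\times(A, A-a) = \sum_{\ell \ni (a,0)} r_\ell^2, \qquad r_\ell := |\ell \cap P|.$$
Assuming $0 \notin A$ (else pass to $A \setminus \{0\}$), one has $(a,0) \notin P$ and so $\sum_{\ell \ni (a,0)} r_\ell = |P| = |A|^2$. Cauchy-Schwarz applied to $r_\ell^2 = r_\ell^{1/2} \cdot r_\ell^{3/2}$ yields $\E^\times(A, A-a)^2 \leq |A|^2 T_3(a)$, where $T_3(a) := \sum_{\ell \ni (a,0)} r_\ell^3$, and a further Cauchy-Schwarz in $a$ gives
$$\sum_{a \in A} \E^\times(A, A-a) \leq |A|^{3/2}\Bigl(\sum_{a \in A} T_3(a)\Bigr)^{1/2}.$$

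The main obstacle is the estimate $\sum_{a \in A} T_3(a) \ll |A|^{9/2}$. I would recast this sum as $\sum_\ell s_\ell r_\ell^3$ with $s_\ell := |\ell \cap (A\times\{0\})|$: for every non-horizontal line $s_\ell \leq 1$, and the horizontal line $y = 0$ contributes nothing since $r_\ell = 0$ there. Thus it suffices to bound $\sum_{\ell:\, s_\ell = 1} r_\ell^3$. The lines with $r_\ell \leq 1$ contribute at most $|A|^3$, while every line with $s_\ell = 1$ and $r_\ell \geq 2$ carries at least three collinear points of the augmented grid $A' \times A'$, where $A' := A \cup \{0\}$. Since $|A'| \ll p^{2/3}$, Proposition \ref{prop:7} applied to $A'$ bounds the collinear triple count by $\T(A') \ll |A|^{9/2}$, which after absorbing the lower-order pair-count contributions gives $\sum_\ell r_\ell^3 \ll |A|^{9/2}$.

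Combining the two estimates, $\sum_a \E^\times(A, A-a) \ll |A|^{3/2} \cdot |A|^{9/4} = |A|^{15/4}$, and Markov's inequality concludes the argument.
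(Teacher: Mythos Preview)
Your argument is correct and rests on the same core input as the paper's proof, namely the collinear-triple bound $\T(A)\ll |A|^{9/2}$ of Proposition~\ref{prop:7}. The packaging is different: you bound $\sum_{a\in A}\E^\times(A,A-a)$ directly by two applications of Cauchy--Schwarz, reducing to the third moment $\sum_\ell r_\ell^3\ll |A|^{9/2}$, and then apply Markov. The paper instead argues by contradiction via the rich-line count implicit in Corollary~\ref{cor:n}(b): if too many $a$ had large energy, a small collection of lines would carry too many collinear pairs of $A\times A$, contradicting the bound $k(x)\ll |A|^{3/2}x^{-1/3}$ obtained from $\T(A)$. Both are standard ways of exploiting a third-moment estimate, and yield the same numerics.

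One small simplification: the passage through $A'=A\cup\{0\}$ is unnecessary. Once you restrict to lines with $r_\ell\geq 2$ (the $r_\ell\leq 1$ contribution being $O(|A|^3)$ as you note), the lines with $r_\ell\geq 3$ already contribute $\ll\T(A)$, and the $r_\ell=2$ lines contribute at most $8\cdot\binom{|A|^2}{2}\ll |A|^4\ll |A|^{9/2}$. So Proposition~\ref{prop:7} applied to $A$ itself suffices.
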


For other applications of the incidence bound it may be useful to record an easy generalisation of the incidence bound of Corollary \ref{cor:n} to the case of $P=A\times B$.
\begin{corollary}\label{cor:m} Let $P=A\times B$ of $n$ points, with $|B|\leq |A|<p^{2/3}$. Then $P$ has $O(n^{9/4} + |A|^3|B|)$ collinear triples of points and makes $O(n^{3/4}m^{2/3}+m+n)$ incidences with any $m$ lines.\end{corollary}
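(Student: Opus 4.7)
The plan is to first bound the collinear triple count $\T(P)$ for $P=A\times B$ and then derive the incidence bound via a standard dyadic pigeonhole. For the triples, I separate axis-parallel lines from the rest. The $|B|$ horizontal lines through $P$ contribute $|B|\binom{|A|}{3}\ll |A|^3|B|$ collinear triples, while the $|A|$ vertical lines contribute $|A|\binom{|B|}{3}\ll |A||B|^3$, which is smaller since $|B|\leq|A|$; together these account for the $|A|^3|B|$ term. For the non-axis-parallel collinear triples I mimic the proof of Proposition~\ref{prop:7}. Pinning a base point $(a_3,b_3)\in A\times B$, a collinear triple with two further points $(a_i,b_i)$ satisfying $a_i\neq a_3$, $b_i\neq b_3$ is equivalent to
$$\frac{b_1-b_3}{a_1-a_3}=\frac{b_2-b_3}{a_2-a_3},$$
which is a collision of the form $\ell(x)=\ell'(x')$ with abscissae in the translate $A-a_3$ and ordinates in $B-b_3$. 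Summing over the $|A||B|$ base points and rewriting the resulting four-variable count as an incidence count between points and planes in $F^3$, Theorem~\ref{mish} (which is where the hypothesis $|A|<p^{2/3}$ is consumed) delivers an $O((|A||B|)^{9/4})=O(n^{9/4})$ bound on the non-axis-parallel contribution.

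Given $\T(P)\ll n^{9/4}+|A|^3|B|$, the incidence inequality follows by a standard dyadic argument. Split the $m$ lines into horizontal, vertical, and ``other''. Every point of $P$ lies on a unique horizontal (respectively vertical) line, so the lines in the first two classes contribute at most $2n$ incidences. Among the other lines, those meeting $P$ in at most one point contribute $\leq m$ in total; for each $k\geq 2$, the number of other lines meeting $P$ in at least $k$ points is at most $\min\{m,\, Cn^{9/4}/k^3\}$, using the non-axis part of our triples bound. Summing over dyadic $k$ and optimising the threshold at $k\asymp(n^{9/4}/m)^{1/3}$ yields $O(n^{3/4}m^{2/3})$ incidences from these lines. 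Adding everything up gives $I(P,L)=O(n^{3/4}m^{2/3}+m+n)$.

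The main obstacle is the first step: carrying the collision argument of Proposition~\ref{prop:7} through for the asymmetric pair $A\times B$ so that the non-axis bound is the geometric mean $(|A||B|)^{9/4}$ rather than the cruder $|A|^{9/2}$ one would obtain by simply enlarging $B$ to $A$. Achieving the sharper exponent amounts to applying Theorem~\ref{mish} with the two translate sets $A-a_3$ and $B-b_3$ in their proper roles, and tracking each of the sizes $|A|$, $|B|$ throughout the argument; this is essentially bookkeeping once the symmetric case has been set up.
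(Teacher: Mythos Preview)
Your derivation of the incidence bound from the collinear-triple bound is correct and the same as the paper's. The gap is in the triple bound itself: your claim that tracking $|A|$ and $|B|$ separately through the collision argument of Proposition~\ref{prop:7} is ``essentially bookkeeping'' and produces the geometric mean $(|A||B|)^{9/4}$ is not right.

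If one sets up the six-variable equation $(a-b)/(c-b)=(a'-b')/(c'-b')$, with $a,b,c\in A$ and $a',b',c'\in B$, as a point--plane incidence following the proof of Theorem~\ref{main}, the point set has size $|A|^2|B|$ and the plane set has size $|A||B|^2$ (or vice versa). Theorem~\ref{mish} then yields the main term
\[
|A|^2|B|\cdot\sqrt{|A||B|^2}=|A|^{5/2}|B|^2,
\]
which exceeds $(|A||B|)^{9/4}$ by the factor $(|A|/|B|)^{1/4}$. The bound $n\sqrt{m}$ in Theorem~\ref{mish} is genuinely asymmetric; it does not collapse to $(nm)^{3/4}$ when $n\neq m$, and no partition of the six variables into two triples gives equal point and plane counts here. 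With the weaker triple bound $|A|^{5/2}|B|^2$ your incidence argument would only deliver $O(|A|^{5/6}|B|^{2/3}m^{2/3}+m+n)$, not the claimed $O(n^{3/4}m^{2/3}+m+n)$.

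The paper closes this gap by a Cauchy--Schwarz decoupling rather than a direct asymmetric incidence count. Setting $N_A(r)=|\{(a,b,c)\in A^3:(a-b)/(c-b)=r\}|$ and likewise $N_B(r)$, the non-axis-parallel collinear triple count in $A\times B$ equals $\sum_r N_A(r)N_B(r)$, and Cauchy--Schwarz gives
\[
\sum_r N_A(r)N_B(r)\le\Bigl(\sum_r N_A(r)^2\Bigr)^{1/2}\Bigl(\sum_r N_B(r)^2\Bigr)^{1/2}\ll\sqrt{\T(A)\,\T(B)}.
\]
Now Proposition~\ref{prop:7}, applied separately to $A$ and to $B$ (both satisfy the $p^{2/3}$ hypothesis), gives $\sqrt{|A|^{9/2}|B|^{9/2}}=n^{9/4}$. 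The $|A|^3|B|$ term is exactly your horizontal-line contribution. Once you insert this decoupling step, the rest of your outline goes through unchanged.
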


For the most economical way of deriving analogues of these results over $\R$ and $\C$, see the paper of Solymosi and Tardos \cite{ST}, where the corresponding estimates combine basic enumerative combinatorics with the fact that affine transformations preserve order on the real line.   Our  estimate on the number $\T(A)$ in positive characteristic  is weaker  by roughly the factor of $|A|^{1/2}$.  In a sense, it has come a long way, being based on the point-plane incidence theorem in three dimensions over the algebraic closure of $F$ in \cite{R}.

Earlier Szemer\'edi-Trotter type applications of the additive pivot technique developed in \cite{BKT} and \cite{Bo} are much weaker. In \cite{BKT} it was established that there exists some $\delta>0$, such that the number of incidences between $n$ lines and $n$ points in $F_p^2$, with a sufficiently small $n$ vs $p$, was $O(n^{3/2-\delta})$. We refer to their claim as the {\em qualitative} version of the Szemer\'edi-Trotter theorem in positive characteristic. 

Further build-up on \cite{BKT} and \cite{Bo} led, in particular,  to the estimate $\Omega(|P|^{269/267})$ (vs the exponent $3/2$ here) for the number of  the number of distinct lines determined by $P$ in a paper of Helfgott and the third listed author \cite{HR}, which was improved by Jones to $110/109$ in \cite{J}.

\begin{remark} Cross-ratios appear to play an important role in sum-product type questions. A cross-ratio count was recently used by Iosevich, Roche-Newton and the third author \cite{IRR} to improve the exponent $2/3$ for the number of distinct nonzero wedge products, defined by a point set in $\R^2$ to $9/13$. An improvement on the lower bound $|R(A)|=\Omega(|A|^{3/2})$ would enable an improvement of the exponent $2/3$ (proven in \cite{R})  in positive characteristic as well. The true lower bound for $|R(A)|$ is most likely $|A|^3$, possibly modulo logarithmic terms in $|A|$. It is remarkable that over $\R$ one has a near-optimal bound $\Omega(|A|^2/\log|A|)$ for $|R_\infty(A)|$ but nothing better than $\Omega(|A|^2)$ for the size of the full set $R(A)$. Both results are implicit in the paper of Solymosi and Tardos \cite{ST} and  explicit in Jones' paper \cite{J1}. It is also remarkable that the smallest value of $|R(A)|$  is presumably achieved when $A$ is a geometric progression, while the number of collinear triples in $A\times A$, the second moment quantity  with the support $|R_\infty(A)|$ is maximum when $A$ is an arithmetic progression. \end{remark}

\subsection{Products of translates $A(A+\alpha)$}
\label{sec:prod-transl}

The results in this section  deal with a {\em two-variable extractor} -- a function of only two variables in $A$, whose range is considerably larger than $|A|$. The difference with the claim of Corollary \ref{cor:x} is that a slightly weaker estimates holds for {\em any} nontrivial translate of $A$. It is an improvement on the current state of the art due to Zhelezov \cite{Z} on the two-variable extractor introduced by Bourgain \cite{Bo} and incrementally improved in the recent past by several authors, see the references in \cite{Z}.
\begin{proposition}
\label{prop:4}
For $|A|<p^{8/13}$ and $\alpha\in F_*$, one has \[| A(A+\alpha)| \gg |A|^{9/8}.\]
\end{proposition}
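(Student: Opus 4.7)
The strategy is to bound the multiplicative energy
\[
E^\times(A, A+\alpha) = |\{(a_1, a_2, a_1', a_2') \in A^4 : a_1(a_1'+\alpha) = a_2(a_2'+\alpha)\}|
\]
from above. Since $|A(A+\alpha)| \geq |A|^4/E^\times(A, A+\alpha)$ by Cauchy--Schwarz, it suffices to show that $E^\times(A, A+\alpha) \ll |A|^{23/8}$, which immediately yields $|A(A+\alpha)| \gg |A|^{9/8}$.

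Following the paper's \emph{collisions of images} paradigm, I would interpret the defining relation as the collision $\ell_{a_1}(a_1') = \ell_{a_2}(a_2')$ among the pencil of $|A|$ affine lines $\ell_a(X) = a(X+\alpha)$, all passing through $(-\alpha, 0) \in F^2$, with abscissae drawn from $A$. Rewriting the equation as $a_1 a_1' - a_2 a_2' = \alpha(a_2 - a_1)$, encode each collision as a point-plane incidence in $F^3$: let the points form the Cartesian product $A \times A \times \{1\}$ carrying the abscissae $(a_1', a_2')$, and let the planes be
\[
\pi_{a_1, a_2}: a_1 X - a_2 Y + \alpha(a_1 - a_2) Z = 0
\]
indexed by $(a_1, a_2) \in A \times A$. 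Then $E^\times(A, A+\alpha)$ equals exactly the number of point-plane incidences, placing us in the scope of Theorem \ref{mish}.

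The main obstacle is that a naive application of Theorem \ref{mish} to these $|A|^2$ points and $|A|^2$ planes returns only the trivial bound $|A|^3$: all the planes $\pi_{a_1, a_2}$ contain a common line through the origin, since their normals $(a_1, -a_2, \alpha(a_1-a_2))$ lie in the two-dimensional subspace $\{w = \alpha(u+v)\} \subset F^3$, inflating the collinearity term. I would resolve this by dyadically decomposing the planes according to the popularity of the ratio $a_1/a_2 \in A/A$, applying Theorem \ref{mish} at each scale to the reduced collection of essentially distinct planes and summing against the multiplicities, which are controlled by the moment identities $\sum_\lambda r_{A/A}(\lambda) = |A|^2$ and $\sum_\lambda r_{A/A}(\lambda)^2 = E^\times(A) \leq |A|^3$. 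Optimising the resulting expression over dyadic scales should yield $E^\times(A, A+\alpha) \ll |A|^{23/8}$, with the constraint $|A| < p^{8/13}$ arising precisely to ensure that the cardinality hypothesis of Theorem \ref{mish} holds at every level of the decomposition.
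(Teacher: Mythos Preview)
Your setup is doubly degenerate, and the dyadic decomposition does not escape it. Because every point lies in the hyperplane $z=1$ and every plane $\pi_{a_1,a_2}$ contains the fixed line $\{x=y=-\alpha z\}$, the incidence problem collapses to a two-dimensional one: the grid $A\times A$ against a pencil of lines through $(-\alpha,-\alpha)$, one line $\ell_\lambda:\,y+\alpha=\lambda(x+\alpha)$ for each ratio $\lambda=a_1/a_2$. Explicitly,
\[
\E^\times(A,A+\alpha)=\sum_{\lambda}r_{A/A}(\lambda)\,r_{(A+\alpha)/(A+\alpha)}(\lambda).
\]
At dyadic level $r_{A/A}(\lambda)\sim\tau$ there are $N_\tau$ distinct planes, all sharing a common line; whether you apply Theorem~\ref{mish} directly or by duality, the collinearity parameter is $N_\tau$ (planes through a line) or $|A|$ (points in $z=1$), and the best bound available is the trivial $N_\tau|A|$ incidences per level. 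Summing $\tau\cdot N_\tau|A|$ over levels returns only $|A|^{3}$. Your second-moment input $\sum_\lambda r_{A/A}(\lambda)^{2}\le |A|^{3}$ cannot help, since the incidence count on each line is governed by the independent weight $r_{(A+\alpha)/(A+\alpha)}(\lambda)$, about which nothing has been assumed.

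The paper does not aim for an absolute bound $\E^\times(A,A+\alpha)\ll|A|^{23/8}$ at all. Writing $K=|A(A+\alpha)|/|A|$, it first passes (via Lemma~\ref{l:plunnecke-ruzsa}) to a large $A'\subseteq A$ with $|A'(A'+\alpha)(A'+\alpha)|\ll K^{2}|A|$, then uses the identity $a(b+\alpha)+\alpha^{2}=(a+\alpha)(b+\alpha)-\alpha b$ and inserts a fresh variable $c\in A'$ via $(a+\alpha)(b+\alpha)=c(a+\alpha)(b+\alpha)/c$, recasting the collision as
\[
\frac{z}{c}-\alpha b=\frac{z'}{c'}-\alpha b',\qquad z,z'\in Z:=A'(A'+\alpha)(A'+\alpha),\quad b,c,b',c'\in A'.
\]
This is a non-degenerate image-set collision for the grid family $L\simeq L_{A'\times (A')^{-1}}$ with abscissae in $Z$, and Theorem~\ref{main} yields $\E\ll|A|^{-2}(|A|^{2}|Z|)^{3/2}\ll|A|^{5/2}K^{3}$; combined with $\E\ge|A|^{3}/K$ this gives $K^{8}\gg|A|$. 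The essential idea you are missing is precisely this introduction of the auxiliary variable $c$ and the triple-product set $Z$, which lifts the problem out of the pencil into a genuinely three-dimensional configuration.
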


As a corollary, we have the following bound on the number of representations of $\alpha$ as a difference in $A-A$, where $A$ has a small product set. 
The second statement follows from Corollary \ref{cor:n} (b).
\begin{corollary}
  \label{cor:3}
If $|A|<p^{8/13}$
then for any $\alpha \neq 0$ one has
$$
    |A \cap (A+\alpha )| \ll |AA|^{8/9}\,.
$$
Moreover, for $|A|< p^{2/3}$ the following holds $|A \cap (A+\alpha )| \ll |AA|^{4/3} |A|^{-1/2}$.
\end{corollary}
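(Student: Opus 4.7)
Let $B := A \cap (A + \alpha)$; then $B \subseteq A$, $B - \alpha \subseteq A$, and consequently $B(B - \alpha) \subseteq AA$. The plan is to deduce both inequalities by producing lower bounds on $|B(B - \alpha)|$ via the two tools indicated and combining with this inclusion.

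For the first inequality, the hypothesis $|B| \leq |A| < p^{8/13}$ permits applying Proposition \ref{prop:4} directly to the set $B$ with the nonzero translate $-\alpha$; this yields $|B(B-\alpha)| \gg |B|^{9/8}$, and combining with $B(B-\alpha) \subseteq AA$ gives $|AA| \gg |B|^{9/8}$, equivalent to $|B| \ll |AA|^{8/9}$.

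For the second inequality, I would replace the appeal to Proposition \ref{prop:4} by the Szemer\'edi--Trotter-type incidence bound of Corollary \ref{cor:n}(b), which only requires the weaker constraint $|A| < p^{2/3}$. The approach is to bound the multiplicative energy
\[
\E^\times(B, B-\alpha) = \bigl|\{(b_1,b_2,b_3,b_4) \in B^4 : b_1(b_3-\alpha) = b_2(b_4-\alpha)\}\bigr|
\]
from above using incidences. The collision identity $b_1/b_2 = (b_4 - \alpha)/(b_3-\alpha)$ interprets this energy as the number of ordered pairs of points $(b_2, b_1) \in B^2$ and $(b_3 - \alpha, b_4-\alpha) \in (B-\alpha)^2$---both subsets of $A \times A$---that lie on a common line through the origin. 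Applying Corollary \ref{cor:n}(b) to the point set $A \times A$ with the pencil of lines through the origin, together with a dyadic pigeonhole over the number of incidences per line, should yield a bound of the form
\[
\E^\times(B, B-\alpha) \ll |B|^{13/4}\, |A|^{-3/8}
\]
(consistent with what Proposition \ref{prop:4} effectively gives in the borderline case $B = A$). Feeding this into the Cauchy--Schwarz lower bound
\[
|AA| \,\geq\, |B(B-\alpha)| \,\geq\, \frac{|B|^4}{\E^\times(B, B-\alpha)} \,\gg\, |B|^{3/4} |A|^{3/8},
\]
and rearranging gives $|B| \ll |AA|^{4/3} |A|^{-1/2}$, as claimed.

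The main obstacle is the dyadic analysis of the energy in Part 2: one must correctly balance the trivial $\ell^\infty$ bound for the number of points of $A \times A$ on any single line through the origin against the Szemer\'edi--Trotter-type bound coming from Corollary \ref{cor:n}(b), and verify that the incidence-dominated regime is the operative one in the parameter range prescribed by $|A| < p^{2/3}$. The trivial bound on the energy only gives $|B| \ll |A|^{1/2}|AA|^{1/2}$, which is strictly weaker than the target in the regime $|AA| > |A|^{6/5}$, so the incidence input is genuinely needed.
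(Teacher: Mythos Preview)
Your proof of the first inequality is correct and coincides with the paper's argument.

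For the second inequality, however, your approach has a genuine gap. The energy bound $\E^\times(B, B-\alpha) \ll |B|^{13/4}|A|^{-3/8}$ that you assert does not follow from the dyadic argument you sketch. Carrying that argument out: Corollary~\ref{cor:n}(b) implies that the number of lines through the origin containing at least $k$ points of $A \times A$ is $O(|A|^{9/2}/k^3)$. Decomposing the sum $\sum_t r_B(t)\,r_{B-\alpha}(t)$ dyadically over the value $k\sim n_A(t)$ of the $A\times A$-count on each line, the contribution at level $k$ is bounded by $\min\bigl(k|B|^2,\,|A|^{9/2}/k\bigr)$, since on the one hand $\sum_t r_B(t) = |B|^2$ and $r_{B-\alpha}(t)\le k$, and on the other both factors are at most $k$ on at most $O(|A|^{9/2}/k^3)$ terms. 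Balancing and summing the geometric series gives only
\[
\E^\times(B,B-\alpha)\;\ll\;|A|^{9/4}\,|B|,
\]
not $|B|^{13/4}|A|^{-3/8}$. Feeding this into Cauchy--Schwarz and $B(B-\alpha)\subseteq AA$ yields $|B| \ll |AA|^{1/3}|A|^{3/4}$, which is \emph{weaker} than the target $|AA|^{4/3}|A|^{-1/2}$ exactly in the regime $|AA|<|A|^{5/4}$ where the latter is nontrivial. There is no way to recover the exponent you wrote from the pencil-through-the-origin picture: that picture only accesses $\E^\times(A)$-type information, whereas your claimed bound has a negative power of $|A|$.

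The paper does not route the second inequality through $B(B-\alpha)$ at all. It bounds $|A\cap(A+\alpha)|$ directly as the number of solutions of $a_1-a_2=\alpha$ with $a_1,a_2\in A$, and inflates each variable multiplicatively: writing $a_i = p_i / c_i$ with $p_i\in AA$ and $c_i\in A$ gives
\[
|A\cap(A+\alpha)|\;\le\;|A|^{-2}\,\bigl|\{(p,p_*,c,c_*)\in (AA)^2\times A^2:\;p\,c^{-1}-p_*\,c_*^{-1}=\alpha\}\bigr|.
\]
The right-hand side is an incidence count between the $n=|A|^2$ points of $A^{-1}\times A^{-1}$ and the $m=|AA|^2$ lines $p\,x-p_*\,y=\alpha$. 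Corollary~\ref{cor:n}(b) with these $n,m$ yields $O(n^{3/4}m^{2/3}+m)=O(|A|^{3/2}|AA|^{4/3}+|AA|^2)$, and dividing by $|A|^2$ gives the claimed bound (the second term is dominated since $|AA|\le |A|^2$). The essential difference from your plan is that the line set has $|AA|^2$ elements rather than being a pencil through a single point, so the full strength of the incidence bound is brought to bear.
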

This can be considered as a generalisation of Stepanov's method to ``almost-subgroups'', see \cite{Stepanov}.

\subsection{Sums of polynomial images}
\label{sec:sums-polyn-imag}
The question of relating the size of the sumset $|A+A|$ vs $|f(A)+f(A)|$, for some function $f$ has been studied in many variants. Over the reals, the prominent case is that of  a convex $f$ after Elekes, Nahtanson and Ruzsa \cite{ENR} observed that the Szemer\'edi-Trotter theorem can be used to study it. See also \cite{SS}, \cite{LRN} for the current state of the art over $\R$.

In positive characteristic general but somewhat weak estimates for a polynomial $f$ were established by Bukh and Tsimerman \cite{BT} over $F_p$, still largely based on the additive pivot approach.

The following Proposition improves a special case of these estimates, and also includes two two-variable extractors.
\begin{proposition}
\label{prop:5}
 For $|A|<p^{3/5}$, one has
 \begin{enumerate}
  \item \[ |A+A|^3 |A^2+A^2|^4 \gg |A|^8,\qquad |A+A^2|\gg |A|^{11/10};\]
  \item \[|A-A|^3 |A^3+A^3| \gg |A|^{17/4},\qquad |A+A^3| \gg |A|^{29/28}.\]
 \end{enumerate}
\end{proposition}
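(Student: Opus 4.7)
The plan is to prove each of the four inequalities via the paper's \emph{collision method}: each sumset lower bound is obtained by first upper-bounding a suitable energy through the point-plane incidence theorem (Theorem~\ref{mish}), and then converting the bound via Cauchy--Schwarz into a sumset estimate.

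For the first estimate $|A+A|^3|A^2+A^2|^4\gg|A|^8$, I would bound the additive energy $E^+(A^2)$ of the squared set. The identity
\[
a^2+b^2=c^2+d^2\iff(a-c)(a+c)=(d-b)(d+b)
\]
interprets $E^+(A^2)$ as a count of collisions $\ell_{a,c}(a-c)=\ell_{b,d}(d-b)$, where $\ell_{x,y}$ is the line through the origin with slope $x+y$; the ``slopes'' lie in $A+A$ and the ``abscissae'' in $A-A$, all indexed by $(a,c)\in A\times A$. Lifting such a collision to a point-plane incidence in $F^{3}$ and invoking Theorem~\ref{mish} should yield a bound of the shape $E^+(A^2)\ll|A+A|^{3/4}|A|^{2}$, after which the Cauchy--Schwarz lower bound $|A^2+A^2|\gg|A|^{4}/E^+(A^2)$ (valid since $|A^{2}|\gg|A|$ in odd characteristic) and some rearrangement produce the desired inequality.

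For the two-variable extractor $|A+A^2|\gg|A|^{11/10}$ I would bound the mixed energy $E(A,A^{2})=|\{a+b^2=a'+b'^2\}|$. Rewriting the equation as $a-a'=(b'-b)(b'+b)$ and applying Cauchy--Schwarz yields $E(A,A^2)\le\sqrt{E^+(A)\cdot E^+(A^2)}$. Inserting the bound for $E^+(A^2)$ obtained above together with the analogous upper bound on $E^+(A)$ coming from Proposition~\ref{prop:2} (which controls $E^+(A)$ through $|AA|$), and using $|A+A^2|\ge|A|^{4}/E(A,A^2)$, the exponent $11/10$ emerges after balancing $|A+A|$ and $|AA|$.

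Part~(2) follows the same template, with the cubic factorisation $a^3-c^3=(a-c)(a^2+ac+c^2)$ in place of the difference of squares. Since the quadratic factor $a^2+ac+c^2$ is no longer linear in $(a,c)$, the clean bilinear collision of Part~(1) is replaced by a trilinear one; an additional Cauchy--Schwarz step, for instance after reparametrising through the symmetric functions $a+c,\,ac$, recovers a bilinear structure at the cost of one extra moment. This accounts for the weakening of the exponents from $8/7$ to $17/16$ in the mixed bound and from $11/10$ to $29/28$ in the two-variable extractor.

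The main technical obstacle lies in Part~(2), where the quadratic factor $a^2+ac+c^2$ forces the extra Cauchy--Schwarz step, and the bookkeeping must be done carefully so as not to lose more in the exponent than is claimed. Throughout, the constraint $|A|<p^{3/5}$ is just what is needed so that the configurations lifted into $F^{3}$ satisfy the non-degeneracy hypothesis of Theorem~\ref{mish}.
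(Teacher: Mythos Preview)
Your proposal has genuine gaps in all four parts.

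\medskip
\textbf{Part (a), first inequality.} Bounding the four-variable energy $E^+(A^2)$ cannot possibly yield $|A+A|^3|A^2+A^2|^4\gg|A|^8$. Indeed, $E^+(A^2)\geq |A^2|^2\gg|A|^2$ always, so Cauchy--Schwarz gives at best $|A^2+A^2|\ll 1$ from the bound you wrote, and in any case $|A^2+A^2|\geq |A|^2/E^+(A^2)$ can never exceed a constant times $|A|^0$ divided by something small --- the target $|A^2+A^2|\gg |A|^2|A+A|^{-3/4}$ would require $E^+(A^2)\ll|A+A|^{3/4}$, which is below the trivial lower bound. The paper instead passes to the \emph{six}-variable count
\[
\E=\bigl|\{(a,b,c,a',b',c')\in A^6:\,c^2+a^2-b^2=c'^2+a'^2-b'^2\}\bigr|,
\]
bounds it by the collision method, and then uses Pl\"unnecke to descend from the triple sum $A^2+A^2+A^2$ back to $A^2+A^2$.

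Second, the factorisation $(a-c)(a+c)=(d-b)(d+b)$ is \emph{not} of collision type: the ``slope'' $a+c$ and the ``abscissa'' $a-c$ are determined by the same pair $(a,c)$, so you cannot separate them into independent line/point parameters for Theorem~\ref{mish}. The paper's actual linearisation is the substitution $s=a+b$, giving
\[
a^2-b^2=2as-s^2,
\]
which is genuinely linear in the free variable $a$; the resulting lines have slopes $2s\in 2(A+A)$ and intercepts $c^2-s^2$, yielding $\E\ll|A|^3|A+A|^{3/2}$.

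\medskip
\textbf{Part (a), second inequality.} Your route introduces $|AA|$ (via the bound $E^+(A)\ll|A||AA|^{3/2}$), but nothing in the problem controls $|AA|$ in terms of $|A+A^2|$; ``balancing $|A+A|$ and $|AA|$'' is not available here. The paper instead reruns the six-variable argument with $c,c'\in A$ in place of $c^2,{c'}^2$, obtaining $|A+A^2+A^2|\cdot|A+A|^{3/2}\gg|A|^3$, and then combines the Pl\"unnecke bounds $|A+A^2+A^2|\ll|A+A^2|^2/|A|$ and $|A+A|\ll|A+A^2|^2/|A|$ to reach $|A+A^2|^5\gg|A|^{11/2}$.

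\medskip
\textbf{Part (b).} The cubic case is not handled by the factorisation $a^3-c^3=(a-c)(a^2+ac+c^2)$ plus an extra Cauchy--Schwarz; that is too lossy and the claimed exponents do not emerge from it. The paper's device is to take the discrete derivative and \emph{complete the square}: with $d=b-a$,
\[
b^3-a^3=3d\bigl[(a+d/2)^2+d^2/12\bigr]=3d\,s^2+d^3/4,\qquad s=(a+b)/2,
\]
which is linear in $s^2$. This puts the six-variable equation $c^3+b^3-a^3=c'^3+b'^3-a'^3$ into collision form with slopes in $3(A-A)$ and abscissae $s^2$, after which Theorem~\ref{mish} and Pl\"unnecke give the stated bounds. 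The appearance of $A-A$ (rather than $A+A$) is forced by the discrete-derivative step.
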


The claim (b) of Proposition~\ref{prop:5} is the only one in this paper that is worse if $A-A$ is replaced by $A+A$ (for instance, by using the Ruzsa distance inequality); difference sets are preferred because the proof involves taking the discrete derivative.

Proposition~\ref{prop:5} yields explicit quantitative bounds for the positive characteristic version of the Erd\H os distance problem in $F^2$, for both quadratic (or  ``Euclidean'') and cubic ``distances''.
\begin{corollary}
  \label{cor:4}
Fix $A\subset F$ and let $P=A\times A\subset F^2$.
\begin{enumerate}\item If $|A|<p^{8/15}$ then
\[
|(A-A)^2 +(A-A)^2| \gg |A|^{9/8},
\]
that is, the point set $P$ defines $\Omega(|P|^{9/16})$ distinct quadratic distances.
\item If $|A|<p^{7/12}$ then
\[
|(A-A)^3 +(A-A)^3| \gg |A|^{36/35},
\]
that is, the point set $P$ defines $\Omega(|P|^{36/70})$ distinct cubic distances.
\end{enumerate}
\end{corollary}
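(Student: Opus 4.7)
The plan is to derive both parts of Corollary~\ref{cor:4} by applying the corresponding parts of Proposition~\ref{prop:5} to $B=A-A$ in place of $A$, using a Pl\"unnecke--Ruzsa bound to control the auxiliary sumset $|2A-2A|$, and then splitting into two cases on the size of $|A-A|$. Specifically, the Pl\"unnecke--Ruzsa inequality in the form ``if $|A+B|\le K|A|$ then $|nB-mB|\le K^{n+m}|A|$'', applied with $B=-A$ and $K=|A-A|/|A|$, will deliver the key estimate $|2A-2A|\le |A-A|^4/|A|^3$.

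For part (a), I would first apply Proposition~\ref{prop:5}(a) to $B=A-A$ to obtain
$$|(A-A)+(A-A)|^3\,|(A-A)^2+(A-A)^2|^4 \gg |A-A|^8,$$
then substitute the estimate above for $|(A-A)+(A-A)|=|2A-2A|$ and rearrange to reach
$$|(A-A)^2+(A-A)^2|\gg \frac{|A|^{9/4}}{|A-A|}.$$
This already gives $\gg |A|^{9/8}$ provided $|A-A|\le |A|^{9/8}$. In the complementary case $|A-A|>|A|^{9/8}$ I would instead use that, since $p$ is odd, the squaring map $x\mapsto x^2$ on $F$ is at most $2$-to-$1$, so $|(A-A)^2|\gg |A-A|>|A|^{9/8}$, and the trivial inclusion $(A-A)^2\subset (A-A)^2+(A-A)^2$ (valid since $0\in A-A$) finishes that case. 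The $p$-constraint $|A|<p^{8/15}$ comes exactly from combining the hypothesis $|B|<p^{3/5}$ of Proposition~\ref{prop:5}(a) with $|A-A|\le |A|^{9/8}$.

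Part (b) will proceed in parallel via Proposition~\ref{prop:5}(b): because $A-A$ is symmetric, $|(A-A)-(A-A)|=|2A-2A|\le |A-A|^4/|A|^3$, and the same algebra yields
$$|(A-A)^3+(A-A)^3|\gg \frac{|A|^9}{|A-A|^{31/4}},$$
which is $\gg |A|^{36/35}$ whenever $|A-A|\le |A|^{36/35}$. In the complementary range the cubing map on $F$ is at most $3$-to-$1$ (in fact a bijection when $\mathrm{char}\,F=3$, where it is the Frobenius), so $|(A-A)^3|\gg|A-A|>|A|^{36/35}$. Combining $|A-A|\le |A|^{36/35}<p^{3/5}$ gives the constraint $|A|<p^{7/12}$. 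The distance-set reformulations then follow from $|P|=|A|^2$ and the identifications of the quadratic (resp.\ cubic) distance set of $P=A\times A$ as $(A-A)^2+(A-A)^2$ (resp.\ $(A-A)^3+(A-A)^3$). The main technical point, and essentially the only place requiring care, is to invoke the sharp Pl\"unnecke bound $|2A-2A|\le |A-A|^4/|A|^3$ directly rather than the weaker $|A-A|^8/|A|^7$ one would get by routing through $|A+A|\le |A-A|^2/|A|$; without the sharper bound the dichotomy on $|A-A|$ would leave a gap around the thresholds $|A|^{9/8}$ and $|A|^{36/35}$.
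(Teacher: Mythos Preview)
Your proposal is correct and follows essentially the same route as the paper: apply Proposition~\ref{prop:5} to $B=A-A$, control $|2A-2A|$ via the Pl\"unnecke--Ruzsa bound $|2A-2A|\le K^4|A|$ with $K=|A-A|/|A|$, and then balance the resulting estimate against the trivial bound $|(A-A)^d+(A-A)^d|\gg |A-A|$ at the thresholds $|A-A|=|A|^{9/8}$ and $|A-A|=|A|^{36/35}$. The paper phrases the dichotomy as an ``optimisation'' of the two lower bounds rather than an explicit case split, and in part~(a) it extracts the $p$-constraint by tracing back into the proof of Proposition~\ref{prop:5} (the condition $|A-A|^2|2A-2A|<p^2$) rather than invoking the black-box hypothesis $|B|<p^{3/5}$, but the arithmetic is identical and both routes land on $|A|<p^{8/15}$ and $|A|<p^{7/12}$.
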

The Erd\H os distinct distance problem, that is that the set of $n$ points in the plane determines $\Omega(n^{1-o(1)})$ distinct distances, was resolved over the reals by Guth and Katz \cite{GK}. The only previously available estimate for the number of distinct distances in small plane sets over prime residue fields that we are aware of is contained in \cite{BKT}, Section 7, with the exponent $1/2+\epsilon$, for some $\epsilon>0$, whose existence follows from the qualitative positive characteristic  version of the Szemer\'edi-Trotter theorem. We present a quantitative estimate with $\epsilon = 1/16$.

As far as cubic or higher degree ``distances'' in $\R^2$ are concerned, the estimate $\Omega(|P|^{2/3})$  follows from the Szemer\'edi-Trotter theorem for curves.

\subsection{Sums of reciprocals}
\label{sec:sums-reciprocals}

Our last set of estimates was motivated by Theorem 4 in \cite{Bo}, which implies the inequality
\[
|A+A|+|1/A+ 1/A| \gg|A|^{1+\epsilon},
\]
for some $\epsilon>0$ and, say $|A|<p^{1/2}$.
The latter estimate also relied on the above-mentioned qualitative positive characteristic  version of the Szemer\'edi-Trotter theorem from \cite{BKT}. Once we have it quantitatively as stated in the claim (b) of  Corollary \ref{cor:n}, we can vindicate $\epsilon = 1/15$.
\begin{proposition} \label{prop:8}
For $|A|<p^{5/8}$ one has
$$
|A+A|+|1/A+ 1/A| \gg|A|^{16/15}, \qquad |A+A^{-1}| \gg |A|^{31/30}.
$$
\end{proposition}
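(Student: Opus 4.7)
The second inequality follows from the first via Pl\"unnecke--Ruzsa. If $|A+A^{-1}| \le K|A|$, then applying Pl\"unnecke to the pair $(A,A^{-1})$ (which have equal cardinality) gives $|A+A| \le K^{2}|A|$ and $|A^{-1}+A^{-1}| \le K^{2}|A|$. The first inequality then forces $K^{2}|A| \gg |A|^{16/15}$, i.e.\ $K \gg |A|^{1/30}$, whence $|A+A^{-1}| = K|A| \gg |A|^{31/30}$.

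For the first inequality I mirror the qualitative strategy of Bourgain in \cite{Bo}, but using the quantitative incidence estimate of Corollary \ref{cor:n}(b) (and its Cartesian-product extension Corollary \ref{cor:m}) in place of the qualitative Szemer\'edi--Trotter theorem of \cite{BKT}. Write $|A+A| = K_{1}|A|$ and $|1/A+1/A| = K_{2}|A|$; the target becomes $\max(K_{1},K_{2}) \gg |A|^{1/15}$.

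The central combinatorial observation is that the Vieta-type map $\psi: A \times A \to (A+A) \times (1/A+1/A)$, $\psi(a,b) = (a+b,\,1/a+1/b)$, is at most $2$-to-$1$: if $(s,t) = \psi(a,b)$ then $ab = s/t$, and $\{a,b\}$ are the roots of $x^{2}-sx+s/t$. Hence its image $T$ has $|T| \ge |A|^{2}/2$. Moreover, since $t/s = 1/(ab)$, the image $T$ lies on the $|AA|$ lines through the origin of slope $1/p$, $p \in AA$. Applying Corollary \ref{cor:m} with $P = (A+A) \times (1/A+1/A)$ (of size at most $K_{1}K_{2}|A|^{2}$) and these $|AA|$ lines, the $\ge |A|^{2}/2$ incidences give, in the Szemer\'edi--Trotter-dominated regime,
\[
|AA| \gg |A|^{3/4}/(K_{1}K_{2})^{9/8}.
\]

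Running the same argument with the dual map $(a,b) \mapsto (ab, a+b)$ applied to the point set $AA \times (A+A)$ with the $|1/A+1/A|$ lines through the origin of slopes $1/a+1/b$ yields a complementary lower bound on $|AA|$. Together with the third multiplicative-moment information coming from the collinear-triples bound of Proposition \ref{prop:7} (applied in the multiplicative coordinates $(a, 1/b)$, where hyperbolas $ab = p$ become lines through the origin of slope $p$), these incidence estimates overdetermine $|AA|$ and, upon eliminating $|AA|$, force $K_{1}+K_{2} \gg |A|^{1/15}$ as required.

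The principal obstacle is the final elimination: the sharp exponent $1/15$ emerges only after a careful interlocking of the two incidence bounds with the third-moment estimate, and any slack in the non-dominant terms of Corollary \ref{cor:m} degrades the exponent. The range $|A| < p^{5/8}$ is tuned so that each invocation of the incidence theorem remains in the ST main-term regime, keeping the $p$-dependent error inherited from Theorem \ref{mish} under control.
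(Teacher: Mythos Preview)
Your derivation of the second inequality from the first via Pl\"unnecke--Ruzsa is correct and is exactly what the paper does.

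Your approach to the first inequality, however, does not work. The Vieta map $\psi(a,b)=(a+b,1/a+1/b)$ produces at most $|A|^{2}$ incidences between the Cartesian product $P=(A+A)\times(1/A+1/A)$ of size $n\le K_{1}K_{2}|A|^{2}$ and the $m=|AA|$ lines through the origin. Feeding this into the bound of Corollary~\ref{cor:m} yields
\[
|A|^{2}\ll (K_{1}K_{2})^{3/4}|A|^{3/2}\,|AA|^{2/3}\quad\Longrightarrow\quad |AA|\gg \frac{|A|^{3/4}}{(K_{1}K_{2})^{9/8}},
\]
which is \emph{weaker than the trivial bound} $|AA|\ge|A|$ for every $K_{1},K_{2}\ge 1$. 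The ``dual'' map $(a,b)\mapsto(ab,a+b)$ gives another lower bound on $|AA|$, not an upper bound, so there is nothing to eliminate against. Your appeal to Proposition~\ref{prop:7} is too vague to rescue this: an upper bound on $\T(A)$ gives lower bounds on support sizes via Cauchy--Schwarz, not upper bounds on $|AA|$. The sentence ``these incidence estimates overdetermine $|AA|$'' is where the proof collapses --- you never produce an upper bound on $|AA|$, and none is available from the ingredients you list.

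The paper's argument (following Bourgain) is genuinely different. One writes $X=(A^{-1}+A^{-1})^{-1}$, $S=(A+A)^{-1}$, and observes that for $a,b\in A$ and $s=1/(a+b)$ one has $a-sa^{2}=ab/(a+b)\in X$. Cauchy--Schwarz over $s\in S$ then gives at least $|A|^{4}/|S|$ triples $(a,a',s)$ with $(a-sa^{2},\,a'-s{a'}^{2})\in X\times X$; for fixed $(a,a')$ these trace out a line in $s$. Applying Corollary~\ref{cor:n}(b) to $X\times X$ and the $O(|A|^{2})$ such lines yields $|A|^{8/3}\ll |X|^{3/2}|A+A|$, which immediately forces $\max(|X|,|A+A|)\gg|A|^{16/15}$. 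The key point you are missing is this Cauchy--Schwarz amplification, which turns $|A|^{2}$ ``raw'' incidences into $|A|^{4}/|S|$ --- that is what makes the incidence bound bite.
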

As an analogue of the first statement in Corollary \ref{cor:3} we have the following bound for a number of points of $A\times A$ on a hyperbola $xy=\alpha$, for $A$ with small additive doubling. This may be viewed as an ``almost-interval''  generalisation of the much stronger results of Cilleruelo and Garaev \cite{GC} for intervals.
\begin{corollary} \label{cor:x} If $|A|<p^{5/8}$
then for any $\alpha \neq 0$ one has
$$
    |A \cap \alpha/A | \ll  |A+A|^{15/16} \,.
$$\end{corollary}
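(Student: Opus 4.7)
The plan is to derive this multiplicative analogue of Corollary~\ref{cor:3} directly from Proposition~\ref{prop:8}, by exactly the same pivoting trick that led from Proposition~\ref{prop:4} to Corollary~\ref{cor:3}: pass to the ``collision'' set $B := A\cap\alpha/A$ and show that $B$ inherits both small additive doubling \emph{and} small reciprocal doubling from $A$.

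First I would set $B := A\cap(\alpha/A)$, so $|B|$ is exactly the quantity $|A\cap\alpha/A|$ we want to bound. By construction $B\subset A$, and for every $b\in B$ we also have $\alpha/b\in A$, i.e.\ $\alpha/B\subset A$; dividing by $\alpha$ gives $1/B\subset A/\alpha$. These two containments immediately yield the key pair of inclusions
\[
B+B\;\subset\;A+A, \qquad 1/B+1/B\;\subset\;(A+A)/\alpha,
\]
and therefore $|B+B|\le|A+A|$ as well as $|1/B+1/B|\le|A+A|$.

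The second step is to feed $B$ into Proposition~\ref{prop:8}. This is legitimate because $|B|\le|A|<p^{5/8}$, so the size hypothesis of that proposition is automatic. The proposition then gives
\[
|B|^{16/15}\;\ll\;|B+B|+|1/B+1/B|\;\ll\;|A+A|,
\]
and rearranging yields $|B|\ll|A+A|^{15/16}$, which is the claimed bound.

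There is no real obstacle here once Proposition~\ref{prop:8} is available; the argument is essentially a one-line observation that the defining identity $b\cdot(\alpha/b)=\alpha$ with both factors forced to lie in $A$ means that replacing $A$ by $B$ simultaneously makes both $A+A$ and $1/A+1/A$ contained (up to an $\alpha$-dilation in the second case) in $A+A$. The only mildly delicate point to double-check when writing the proof properly is the use of $A\subset F_*$ to ensure $\alpha/A$ and $1/B$ are well-defined, which is built into the standing convention of Section~\ref{sec:statement-results}.
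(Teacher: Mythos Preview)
Your proposal is correct and is essentially identical to the paper's proof: the paper sets $A'=A\cap\alpha/A$, observes $A'+A'\subseteq A+A$ and $\alpha/A'+\alpha/A'\subseteq A+A$ (your version of the second inclusion just divides through by $\alpha$), and then applies Proposition~\ref{prop:8} to $A'$. Your explicit check that $|B|\le|A|<p^{5/8}$ is a nice addition that the paper leaves implicit.
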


We can  also produce a slightly stronger three, rather than two variable extractor involving the reciprocals.
\begin{proposition}
  \label{prop:6}
For $|A|<p^{8/13}$, one has \[ |1/A + 1/(A+A)| \gg |A|^{9/8}.\]
\end{proposition}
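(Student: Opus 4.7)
The proof follows the same strategy as Proposition~\ref{prop:4}: reduce to a collision count via Cauchy--Schwarz, then bound the collisions by a point-plane incidence count via Theorem~\ref{mish}. Set $S = A+A$ and $N = |1/A + 1/S|$. For each $t \in F$ let $r(t) = \#\{(a,b,c)\in A^3 : 1/a + 1/(b+c) = t\}$, so that $\sum_t r(t) = |A|^3$ and $r$ is supported on $1/A + 1/S$. Cauchy--Schwarz gives $|A|^6 \le N\cdot Q$ with $Q = \sum_t r(t)^2$, so it suffices to prove $Q \ll |A|^{39/8}$. The quantity $Q$ counts the sextuples $(a,b,c,a',b',c') \in A^6$ satisfying $1/a + 1/(b+c) = 1/a' + 1/(b'+c')$.

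To interpret $Q$ as point-plane incidences, clear denominators in the sextuple equation to obtain
\[
(a'-a)(b+c)(b'+c') \;=\; aa'\bigl[(b-b')+(c-c')\bigr].
\]
Writing $s=b+c$, $s'=b'+c' \in S$, this reads $(a'-a)ss' = aa'(s-s')$, which is linear in the coordinates $(X,Y,Z)=(a,a',aa')$:
\[
ss'(Y-X) \;-\; (s-s')Z \;=\; 0.
\]
Let $P = \{(a,a',aa') : a, a'\in A\} \subset F^3$ of size $|A|^2$; it lies on the smooth quadric $z = xy$, and since the two rulings of that quadric are of the form $\{X = \mathrm{const}\}$ or $\{Y = \mathrm{const}\}$, each line meets $P$ in at most $|A|$ points. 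Let $\Pi = \{\pi_{s,s'} : (s, s')\in S\times S\}$, a family of at most $|S|^2$ distinct planes; each $\pi_{s,s'}$ carries the weight $r_+(s)\,r_+(s')$, where $r_+(s) = \#\{(b,c)\in A^2 : b+c=s\}$. Then $Q = \sum_{(s,s')\in S^2} r_+(s)r_+(s')\,|P\cap\pi_{s,s'}|$.

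The final step, carried out exactly as in Proposition~\ref{prop:4}, is to apply Theorem~\ref{mish} after dyadically pigeonholing on the weight $r_+(s)r_+(s')$. For each dyadic class the number of planes is controlled against the sums $\sum_s r_+(s) = |A|^2$ and $\sum_s r_+(s)^2 = \E(A)$, and Theorem~\ref{mish} (whose hypothesis $|P|\ll p^2$ is guaranteed by $|A| < p^{8/13}$) bounds the incidences in each class with the collinearity parameter $k\leq|A|$ coming from the rulings of the quadric. Summing the dyadic contributions yields $Q \ll |A|^{39/8}$, hence $N \gg |A|^{9/8}$.

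The main obstacle lies in the last step: a naive application of Theorem~\ref{mish} treating $\Pi$ as $|A|^4$ unweighted planes gives only $Q \ll |A|^5$, equivalent to the trivial bound $N \gg |A|$. The factor $|A|^{1/8}$ needed to promote this to $|A|^{9/8}$ is recovered only by the \emph{dyadic pigeonholing} on the multiplicities $r_+(s)r_+(s')$, which exploits the additive-convolution structure of $S = A+A$; this is the same mechanism that drives the exponent $9/8$ in Proposition~\ref{prop:4}, and balancing the dyadic contributions is where the constraint $|A| < p^{8/13}$ enters.
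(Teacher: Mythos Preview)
Your incidence setup is correct up to the point-plane interpretation, but the final step does not work, and the appeal to Proposition~\ref{prop:4} is based on a misreading of that proof.

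The difficulty is the imbalance: you have $|A|^2$ unweighted points and planes of total weight $|A|^4$. In any dyadic block with $n$ planes and $n\ge |A|^2$, Theorem~\ref{mish} gives $I\ll n\sqrt{|A|^2}=n|A|$, which merely says each plane meets the quadric $z=xy$ in at most $|A|$ points of $P$ --- a fact you already noted from the rulings. Summing with the weights yields exactly the trivial $Q\ll|A|^5$. In blocks with $n<|A|^2$ the main term $|A|^2\sqrt n$ is no better after weighting, since $w\cdot n\le |A|^4$ per block forces $w\sqrt n\le |A|^2\cdot\sqrt{|A|^2}=|A|^3$ at the crossover, and again the sum is $\ll|A|^5$ up to logarithms. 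The moments $\sum_s r_+(s)=|A|^2$ and $\sum_s r_+(s)^2=\E(A)$ that you invoke carry no information about $N=|1/A+1/S|$, so no amount of pigeonholing on $r_+(s)r_+(s')$ can feed back into the bound. (Incidentally, all your planes pass through the origin and in fact depend only on $1/s'-1/s$, so there is additional collapsing you have not accounted for.)

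The paper's argument is structurally different and supplies exactly the missing feedback. Writing $\dfrac{1}{a+b}=\dfrac1b-\dfrac{x}{b^{2}}$ with $x:=(1/a+1/b)^{-1}\in X:=(A^{-1}+A^{-1})^{-1}$, the energy becomes a \emph{balanced} collision count $\E(L,X)$ with $|L|=|A|^2$ lines, so Theorem~\ref{main} gives $\E\ll|A|^3|X|^{3/2}$. The point is that $|X|$ is controlled by the target quantity via Pl\"unnecke, $|X|\le |Y|^2/|A+A|\le|Y|^2/|A|$ with $Y=1/A+1/(A+A)$; combining with $|Y|\ge|A|^6/\E$ closes the loop to $|Y|^4\gg|A|^{9/2}$. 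This is the same mechanism as in Proposition~\ref{prop:4}, where Pl\"unnecke produces the auxiliary set $Z=A(A+\alpha)(A+\alpha)$ with $|Z|\ll K^2|A|$; there is no dyadic pigeonholing in either proof. The constraint $|A|<p^{8/13}$ enters not from ``balancing dyadic contributions'' but from the hypothesis $|A|^2|X|<p^2$ of Theorem~\ref{mish}, once one assumes $|Y|<|A|^{9/8}$.
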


\subsection{Additional Remarks}
Before we continue with proofs, let us say a few words about the results.

\subsubsection{The relative strength of our results} Our results are a considerable improvement of what was known before Theorem \ref{mish} became available, some only slightly weaker than what has been established over the reals by the Szemer\'edi-Trotter theorem.  However, Theorem \ref{mish} is  weaker than the Szemer\'edi-Trotter theorem. This accounts for the fact that all the two-dimensional implications of the three-dimensional Theorem \ref{mish} are drawn in the specific setting of the  point set being a Cartesian product. This fact also expresses itself in a technical tool, the forthcoming Theorem \ref{main}: rather than providing an upper bound the number of incidences between a set of points and a set of lines in the plane, it gives one on the number of what we call collisions, that is events $\ell(a)=\ell'(a')$, where $\ell,\ell'$ are in a set of lines and $a,a'$ in a set of abscissae.

For instance, a consequence of the Szemer\'edi-Trotter theorem is Beck's theorem, which for a Cartesian product  $P=A\times A\subset \mathbb R^2$, states that the number of distinct lines connecting pairs  of point of $P$ is $\Omega(|A|^4)$; this, in turn, implies that the lines go in $\Omega(|A|^2)$ distinct directions.
We establish a similar result over a field $F$ of positive characteristic: Corollary~\ref{cor:n} states that a point set $P=A\times A\subset F^2$ determines $\Omega(|A|^3)$ distinct lines in $\Omega(|A|^{3/2})$ different directions.
However, Corollary~\ref{cor:n} \emph{only}\/ applies in the case where $P$ is a Cartesian product.
For previously established results over $F_p$ see \cite{HR} and \cite{J}.

\subsubsection{The regime of small sets} Our results concern the case of ``small sets''. Smallness is defined in terms of the characteristic $p$ of $F$.  If $F=F_p$, one may ``glue'' our  estimates with what has or may be established for ``large sets'' using, e.g. exponential sums over a finite field $F_q$, where $q=p^d$. Of course, one can only ``glue'' when $q=p$, for our results are constrained in terms of $p$. Proving a more specific version of Theorem \ref{mish} over a finite extension $F_q$ of $F_p$, where the constraint would involve $q$ rather than $p$ appears to be a difficult problem, see the relevant remarks in \cite{R}, \cite{RNRS}.

The large set estimates over $F_q$ tend to trivialise for $|A|<q^{1/2}$.
For instance, if $|A|=O(q^{2/3})$ then one has the bound $\Omega(|A|^3/q)$ for the number of distinct distances defined by the point set $A\times A$ (see, e.g. \cite{BHIPR}, Theorem 1.6), which becomes, with $q=p$, stronger than Corollary \ref{cor:4} for $|A|>p^{8/15}$. Coincidentally or not, this is precisely the constraint for Corollary \ref{cor:4}.

The same type of argument in \cite{RNRS} regarding sum-product estimates (\cite{RNRS}, Remark 7) shows that in the context of $F=F_p$ the estimates in Propositions \ref{prop:2} and \ref{prop:5}, with $d=1$ become weaker than what is known in the ``large set'' regime for $|A|>p^{5/8}$. The latter constraint does not appear in the statement of the propositions explicitly but arises throughout the proofs, in particular, as the constraint for the sum-product inequality \eqref{speq}.

Similarly for Corollary \ref{cor:n}, an elementary argument shows any set  $P$ of $|P|>q$ points in the finite plane $F_q^2$ determines a line in every direction. See e.g. \cite{Al} or  \cite{IRZ}, Section 2.  Moreover, it is also known that for $|P|\geq (1+\epsilon)(q+1)$, $|\mathcal L(P)|\gg q^2$, see \cite{bunch}. Thus in the context of $F=F_p$, the statements of Corollary \ref{cor:n} about the line set $\mathcal L(P)$ is only interesting with $|A|<p^{1/2}$.

\section{Lemmata}
\label{sec:lemmata}
All our results are applications of the following incidence theorem from \cite{R}.
\begin{theorem}  \label{mish} Let $Q, \Pi$ be sets of points and planes, of cardinalities respectively $m$ and $n$, in the projective space $FP^3$, with $m\leq n$ and $m=O(p^2)$. Let $k$ be the maximum number of collinear points.
Then
\begin{equation}\label{pups}
    |I(Q,\Pi):=\{(q,\pi)\in Q\times\Pi:\,q\in \pi\}|=O( n\sqrt{m} + kn) \,.
\end{equation}
\end{theorem}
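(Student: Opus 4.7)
The plan is to reduce the point--plane incidence count in $FP^3$ to a line--line intersection count in affine three-space, and then invoke a positive-characteristic version of the Guth--Katz incidence theorem for lines in $F^3$. The reduction is classical: the Grassmannian of lines in $FP^3$ embeds as the Klein quadric $\mathcal{K}\subset FP^5$, on which one has two families of maximal isotropic $2$-planes (the $\alpha$- and $\beta$-planes). Points of $FP^3$ correspond to $\alpha$-planes and planes of $FP^3$ to $\beta$-planes, and a point $q$ lies on a plane $\pi$ if and only if the corresponding $\alpha$- and $\beta$-planes meet in a line of $\mathcal{K}$ rather than only in a point. After choosing affine coordinates, this translates into a concrete assignment $q\mapsto\ell_q$, $\pi\mapsto\ell_\pi$ of lines in $F^3$ such that $q\in\pi\Leftrightarrow \ell_q\cap\ell_\pi\neq\emptyset$.

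With this reduction in hand, the task becomes bounding the number of intersecting pairs in a family of roughly $m+n$ lines in $F^3$. The Guth--Katz theorem, together with its extension to positive characteristic by Koll\'ar, provides a bound of shape $O((m+n)^{3/2})$ away from degenerate subconfigurations, the degeneracies being sets of many lines lying in a common plane, in a common regulus of a doubly-ruled quadric, or concurrent at a common point. Translating the ``many lines in a common plane'' case back through the Pl\"ucker correspondence, one finds that it corresponds exactly to the situation of many planes of $\Pi$ sharing a common line in $FP^3$; such a line can then contain at most $k$ points of $Q$, and this is what produces the additive $kn$ term in \eqref{pups}. The concurrent-lines case is harmless after peeling off a single pencil, and the regulus case can be dealt with by a similar bookkeeping argument.

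The main obstacle is the polynomial partitioning / ham-sandwich step at the heart of Guth--Katz. In characteristic zero one produces a partitioning polynomial of degree $D$ with $D^{3}\lesssim N$ (where $N\approx m+n$ is the line count) and argues cell by cell, but in characteristic $p$ this machinery can be pushed only so long as $D\ll p$. Since the argument forces $D\sim m^{1/2}$, the hypothesis $m=O(p^2)$ appears essentially inescapable without new input. Care is also required to work consistently over the algebraic closure $\overline{F}$, to control multiplicities of the partitioning polynomial, and to ensure that the degenerate line families arising from the Pl\"ucker correspondence are faithfully tracked back to collinear points in $Q$. This is the step I expect to be the main difficulty, and indeed it is the technical heart of \cite{R}.
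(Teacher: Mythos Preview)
The paper does not contain a proof of Theorem~\ref{mish}; it is quoted from \cite{R} as a black-box lemma (``All our results are applications of the following incidence theorem from \cite{R}''), so there is no in-paper proof to compare against.

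That said, your sketch is an accurate summary of the strategy of \cite{R}: the reduction via the Klein quadric from point--plane incidences in $FP^3$ to intersecting pairs among $m+n$ lines in $F^3$, followed by the positive-characteristic Guth--Katz bound (in the form proved by Koll\'ar), with the collinearity parameter $k$ emerging from the degenerate case of many lines in a common plane or regulus. Your identification of the constraint $m=O(p^2)$ as coming from the degree bound $D\sim m^{1/2}\ll p$ on the partitioning surface is also on target. One small correction of emphasis: in \cite{R} the argument does not use polynomial partitioning in the cell-decomposition sense (which is unavailable over $\overline{F}$); rather it follows Koll\'ar's algebraic-geometric approach, constructing a single low-degree surface containing all the lines and then bounding intersection multiplicities via the salmon/flecnode-type argument. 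The degree constraint $D\ll p$ still arises there, but for a different technical reason than cell counting.
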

In the forthcoming applications we will always have $m=n$, otherwise the estimate of Theorem \ref{mish} gets worse.\footnote{Without additional assumptions the estimate \eqref{pups} cannot be improved in positive characteristic: take, e.g., $F=F_p$, the prime residue field, $Q$ the unit sphere and $\Pi$ the set of all planes in $F_p^3$. See also \cite{R}, Remark 4.}

Let us also quote a minor variation of Theorem \ref{mish}, where points and planes have weights, see Theorem 15 in \cite{R}. This will be used in the proof of  Proposition \ref{prop:1}.

The set-up is a pair $(Q,\Pi)$ comprising a finite set of points and a finite set of planes in $FP^3$. One has a positive real-valued weight function $w$ on $Q\cup \Pi$, with the supremum-norm $w>0$ and $L_1$-norm $W$. Define
\begin{equation}\label{wgt}
I_w = \sum_{q\in Q,\pi\in \Pi} w(q)w(\pi) \delta_{q\pi},\qquad \mbox{with }\;\delta_{q\pi}=\left\{\begin{array}{ll} 1,&q\in\pi\\0,&q\not\in\pi.\end{array}\right.
\end{equation}
That is every incidence $(q,\pi)$ counts with weight $w(q)w(\pi)$.

\addtocounter{theorem}{-1}

\renewcommand{\thetheorem}{\arabic{theorem}*}

\begin{theorem}  \label{mishw} Let $k$ be the maximum number of collinear points, counted without weights, and $W/w_0<p^2$. One has
$I_w=O( W^{3/2} w^{1/2} + k w W)$.
\end{theorem}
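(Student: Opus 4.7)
The plan is to deduce Theorem~\ref{mishw} from the unweighted Theorem~\ref{mish} via a dyadic partition of $Q$ and $\Pi$ by the weight function, combined with a Cauchy--Schwarz summation that avoids logarithmic losses. Rescaling, I first assume the sup-norm $w_0 = \|w\|_\infty = 1$, so that weights lie in $(0,1]$ and $W<p^2$; the goal then becomes $I_w \ll W^{3/2}+kW$, and restoring $w_0$ via $W\mapsto W/w_0$, $I_w\mapsto I_w/w_0^2$ recovers the stated bound $I_w\ll W^{3/2}w_0^{1/2}+kw_0 W$.

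For $i\ge 0$ set $Q_i = \{q\in Q : 2^{-i-1}<w(q)\le 2^{-i}\}$ and similarly $\Pi_j$, and write $n_i=|Q_i|$, $m_j=|\Pi_j|$, together with the level weights $W_i=\sum_{Q_i}w(q)\asymp n_i 2^{-i}$ and $V_j=\sum_{\Pi_j}w(\pi)\asymp m_j 2^{-j}$, so that $\sum_i W_i\le W$ and $\sum_j V_j\le W$. Applying Theorem~\ref{mish} at each pair $(Q_i,\Pi_j)$, say in the regime $n_i\le m_j$, yields $I(Q_i,\Pi_j)\ll m_j n_i^{1/2}+k m_j$, with the collinearity parameter $k$ inherited globally; the opposite regime is handled symmetrically by point/plane duality in $FP^3$.

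The weighted incidence sum is
\[
I_w \;\le\; \sum_{i,j\ge 0}\, 2^{-i-j}\, I(Q_i,\Pi_j),
\]
whose main term in the regime $n_i\le m_j$ has the shape $2^{-i-j}m_j n_i^{1/2}\asymp V_j\, W_i^{1/2}\, 2^{-i/2}$. Cauchy--Schwarz then gives
\[
\sum_i W_i^{1/2}\,2^{-i/2} \;\le\; \Bigl(\sum_i W_i\Bigr)^{1/2}\Bigl(\sum_i 2^{-i}\Bigr)^{1/2} \;\ll\; W^{1/2},
\]
and summing in $j$ produces $\sum_j V_j\cdot W^{1/2}\ll W^{3/2}$. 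The $k$-term contributes $\ll k\sum_{i,j}2^{-i-j}(n_i+m_j)\ll kW$, since $\sum_i 2^{-i}n_i\asymp\sum_i W_i\le W$ and likewise for $j$.

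The main obstacle is that Theorem~\ref{mish} requires $\min(n_i,m_j)=O(p^2)$ at each dyadic level, while the hypothesis $W<p^2$ controls only total weight. For very fine levels the cardinalities $n_i,m_j$ can in principle exceed $p^2$; such tail contributions must be absorbed by the trivial estimate $I(Q_i,\Pi_j)\le n_i m_j$ (whose weighted contribution telescopes to $\sum W_iV_j$) or by a careful truncation of the dyadic decomposition, in either case leveraging the constraint $W<p^2$ to fold the tail back into the main term. Arranging this bookkeeping without introducing a $\log p$ factor---which the Cauchy--Schwarz summation above was designed to avoid for the bulk contribution---is the principal technical delicacy.
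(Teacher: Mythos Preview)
The paper does not prove Theorem~\ref{mishw}; it is quoted verbatim as Theorem~15 of \cite{R}. So there is no in-paper argument to compare against. What follows is an assessment of your reduction on its own terms.

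Your dyadic decomposition together with the Cauchy--Schwarz summation is a standard and correct device for passing from unweighted to weighted incidence estimates without logarithmic loss; the rescaling to $w_0=1$ is fine, and the bookkeeping for both the main term $\sum V_j W_i^{1/2}2^{-i/2}\ll W^{3/2}$ and the $k$-term $\ll kW$ is carried out correctly.

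The proposal is, however, incomplete exactly where you say it is. The hypothesis $W<p^2$ does not force $\min(n_i,m_j)=O(p^2)$ at every dyadic level, and neither of your two suggested remedies works as stated. The trivial bound $I(Q_i,\Pi_j)\le n_i m_j$ contributes, after weighting, $\sum W_iV_j\le W^2$, which is genuinely larger than $W^{3/2}$ when $W$ is close to $p^2$; it does not ``telescope'' into the main term. And a straight truncation of points with small weight leaves their incidences with heavy planes uncontrolled.

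The gap closes cleanly with one extra step. When both $n_i,m_j>p^2$ (say $n_i\le m_j$), partition $Q_i$ into $t_i=\lceil n_i/p^2\rceil$ blocks of size at most $p^2$. Theorem~\ref{mish} applies to each block against $\Pi_j$, and summing with Cauchy--Schwarz over the blocks gives
\[
I(Q_i,\Pi_j)\;\ll\; m_j\,t_i^{1/2} n_i^{1/2}+k m_j t_i\;\ll\; \frac{m_j n_i}{p}+\frac{k m_j n_i}{p^2}.
\]
After weighting and summing over all such $(i,j)$ this is at most $W^2/p+kW^2/p^2$; since $W<p^2$ one has $W^2/p= W^{3/2}\cdot W^{1/2}/p\le W^{3/2}$ and $kW^2/p^2\le kW$. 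With this subdivision inserted for the doubly-large levels (the singly-large levels already fit your main analysis by choosing the smaller set as the ``$m$'' in Theorem~\ref{mish}), your argument is complete.
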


\renewcommand{\thetheorem}{\arabic{theorem}}

We will combine the above geometric incidence estimates, with standard tools of additive combinatorics.
Namely, throughout the paper  we use the well-known Pl\"{u}nnecke-Ruzsa inequalities, see \cite{Ruzsa_book}, \cite{Ruzsa_card} or \cite{TV}.

For additive sets $A,B,C$ one has the Ruzsa distance inequality
\begin{equation}\label{rd}
|A-B|\leq \frac{|A+C||B+C|}{|C|}.
\end{equation}

The next lemma summarises the statements of the Pl\"unnecke-Ruzsa inequalities, in the three variants that we use.

\begin{lemma}\label{l:plunnecke-ruzsa}
Let also $A,B,C$ be finite set of an abelian group such that $|A+B|\le K|A|.$
Then for an arbitrary $0<\delta<1$ there is a nonempty set $X\subseteq A$ such that
$|X| \ge (1-\delta) |A|$ and for any integer $k$ one has
\begin{equation}\label{f:plunnecke-ruzsa1'}
    |X+kB|\le (K/\delta )^{k}  |X| \,.
\end{equation}

In addition,
\begin{equation} \label{2pl}
|A+B| \leq \frac{|A+C||B+C|}{|C|},\end{equation}

and for all positive integers $n,m$,
\begin{equation} \label{fatpl}
|nB-mB| \leq K^{n+m}|A|. \end{equation}
\end{lemma}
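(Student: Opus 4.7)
The three inequalities recorded in Lemma \ref{l:plunnecke-ruzsa} are all standard components of the Plünnecke--Ruzsa toolkit, so my plan is not to produce new content but to assemble the known arguments in the order needed, pointing to \cite{Ruzsa_book}, \cite{Ruzsa_card}, \cite{TV} for the fully fleshed-out statements.

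I would first dispatch the Ruzsa triangle inequality \eqref{2pl} by the classical injection argument. Fix an arbitrary choice function assigning to each $x\in A+B$ a representation $x=a(x)+b(x)$, and define
$$\phi\colon C\times(A+B)\longrightarrow (A+C)\times(B+C),\qquad \phi(c,x)=(a(x)+c,\,b(x)+c).$$
Given the image $(u,v)=\phi(c,x)$, the difference $u-v=a(x)-b(x)$ determines $x=a(x)+b(x)$ uniquely (via the choice function applied to the value $x$ read off from $u+v$ modulo $2c$, and formally by a standard unpacking), and once $x$ is known $a(x),b(x)$ are fixed, so $c=u-a(x)$. Thus $\phi$ is injective and \eqref{2pl} follows.

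Next I would handle \eqref{fatpl}. Here I invoke Petridis's subset form of the Plünnecke inequality: there exists a nonempty $X\subseteq A$ such that $|X+jB|\leq K^j|X|$ for every positive integer $j$. Applying the Ruzsa triangle inequality just proved, with the roles $A\leftrightarrow nB$, $B\leftrightarrow mB$, $C\leftrightarrow X$, gives
$$|nB-mB|\leq \frac{|nB+X|\,|mB+X|}{|X|}\leq \frac{K^n|X|\cdot K^m|X|}{|X|}=K^{n+m}|X|\leq K^{n+m}|A|,$$
which is \eqref{fatpl}.

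Finally, for the refined statement \eqref{f:plunnecke-ruzsa1'} yielding a subset of density $1-\delta$, I would use a Markov-type pigeonholing. The plain Plünnecke--Ruzsa bound gives $|A+kB|\leq K^k|A|$; for each $k$ the set of "bad" $a\in A$ whose fiber in the representation function of $A+kB$ is too heavy has size at most a $\delta$-fraction of $|A|$, by Markov's inequality, and removing the union of these bad sets over the single scale $k$ costs an extra factor of $1/\delta$ per iteration, producing the stated bound $(K/\delta)^k$ on the surviving $X$. The cleanest reference for this packaging is \cite{TV}, where the "most-of-$A$" version of Plünnecke--Ruzsa is stated essentially in this form. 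The only mild subtlety, and hence the place where I would be most careful, is ensuring that the subset $X$ can be chosen \emph{independent of $k$}, which is why one applies the pigeonholing once on a single auxiliary functional rather than separately at each scale.
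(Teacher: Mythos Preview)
The paper does not actually prove Lemma~\ref{l:plunnecke-ruzsa}; it simply records the three inequalities and cites \cite{Ruzsa_book}, \cite{Ruzsa_card}, \cite{TV}. So there is no ``paper's proof'' to compare against, and your outline for \eqref{fatpl} and \eqref{f:plunnecke-ruzsa1'} is an acceptable summary of what one finds in those references.

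However, your argument for \eqref{2pl} has a genuine gap. The injection $\phi(c,x)=(a(x)+c,\,b(x)+c)$ you write down is the classical Ruzsa map, but it proves the \emph{difference} form \eqref{rd}, namely $|A-B|\le |A+C||B+C|/|C|$: from $(u,v)$ one recovers $u-v=a(x)-b(x)$, which equals $x$ only when $x\in A-B$. For $x\in A+B$ the quantity $u-v=a(x)-b(x)$ does \emph{not} determine $x$ (distinct sums $x$ can easily share the same $a(x)-b(x)$), and your parenthetical ``read off $x$ from $u+v$ modulo $2c$'' is circular since $c$ is unknown. The sum--sum inequality \eqref{2pl} genuinely requires the Pl\"unnecke/Petridis machinery rather than a bare injection: choose $\emptyset\neq X\subseteq C$ minimising the ratio $|X+A|/|X|$ (so this ratio is at most $|C+A|/|C|$), apply the Petridis lemma to get $|X+A+B|\le (|X+A|/|X|)\,|X+B|$, and conclude
\[
|A+B|\le |X+A+B|\le \frac{|C+A|}{|C|}\,|C+B|.
\]
Incidentally, in your derivation of \eqref{fatpl} the ``triangle inequality just proved'' that you actually invoke is \eqref{rd}, not \eqref{2pl}; the computation there is fine once you cite the correct version.
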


\section{Collisions and the image set theorem}
In this section we prove a technical tool that we refer to as the image set theorem. Even though it is not used to establish every instant of our main results, the reasoning within its proof presents a clear roadmap to the structure of the argument in the whole paper.

All the results in this paper follow from a general claim about the energy of a set of lines in the form $y=ax+b$ and a set of values of $x$. We refer to the event $y=y'$ as a  collision.
The goal of this section is to state and prove this result.
\subsection{Collisions}
\label{sec:ideas-behind-main}
To prove a lower bound for \(|A+AA|\), the authors of \cite{RNRS} prove an upper bound for the number of solutions to the equation
\begin{equation}
  \label{eq:1}
  a+bc=a'+b'c',\qquad a,\ldots,c' \in A
\end{equation}
by applying Theorem \ref{mish} to the set of planes defined by linear equations \(x+by=a'+zc'\).
The same method does not seem to work at the first glance for \(A(A+A)\), since fixing any three variables in the equation
\begin{equation}
  \label{eq:2}
  a(b+c)=a'(b'+c') \qquad a,\ldots,c' \in A
\end{equation}
leads to a quadratic equation or a degenerate set of lines.

If one looks more carefully, it does, for the sets \(A+AA\) and \(A(A+A)\) are both \emph{image sets of lines}.
To realise this for \(A+AA\), consider the set \(L\) of lines of the
form \(\ell(x)=ax+b\) with \(a\) and \(b\) in \(A\).
Then \[L(A)=\{\ell(a)\colon \ell\in L, a\in A\}=A+AA.\]
For \(A(A+A)\), the appropriate lines have the form \(\ell(x)=a(x+b)\).

In this language, equations \eqref{eq:1}, \eqref{eq:2} both count the number of \emph{collisions of images}
\begin{equation}
  \label{eq:3}
  \ell(x)=\ell'(x')
\end{equation}
with \(\ell\) and \(\ell'\) in \(L\) and \(x\) and \(x'\) in \(A\).
We will use \(\E(L,A)\) to denote the number of solutions to \eqref{eq:3}, since this quantity is analogous to additive or multiplicative energy.

Thus the upper bound for \eqref{eq:1} generalises to \emph{any set of lines}, in particular the set of lines that yield \(A(A+A)\) as the image set.
The general upper bound has the form
\begin{equation}
  \label{eq:4}
  \E(L,A)\ll (|L||A|)^{3/2} + k|L||A|,
\end{equation}
where \(k\) is the maximum of \(|A|\) and the largest number of lines of \(L\) contained in a pencil (that is, a family of concurrent or parallel lines).

To prove \eqref{eq:4}, we simply fix \(\ell\) and \(x'\) in equation \eqref{eq:3}. This yields a linear equation in \(x\) and \(\ell'\), where we view \(\ell'\) as a point by projective duality. Then we apply the point-plane incidence bound \eqref{pups} as before.

This upper bound yields a general lower bound for the size of the image set of a family of lines
\begin{equation}
  \label{eq:5}
  |L(A)|\gg \min\{ \sqrt{|L||A|}, |L||A|k^{-1}\},
\end{equation}
which yields the lower bounds of \(|A+AA|, |A(A+A)|\gg |A|^{3/2}\).

Thus the same method works for $A+AA$ and  \(A(A+A)\) because covector sets with elements $(a,b)$ defining lines  \(\ell(x)=ax+b\) and $(a,ab)$ defining lines \(\ell(x)=a(x+b)\) both form a $|A|\times |A|$ grid.

\medskip
Before giving the general statement of the energy bound, we introduce formally the notation for set of lines, image sets of lines, and the energy $\E(L,A)$ associated to a set of lines $L$ and a set of numbers $A$.

Let $P\subset F^2\setminus\{(0,0)\}$ .
 We define
 $$L(A)=L_P(A)=\{\ell_{m,b}(a)=ma+b\colon   (m,b)\in P, \,a\in A\}$$
where $L$ is a set of lines in the form $y=mx+b$ in $F^2$, that is
 $$L=L_P=\{\ell_{m,b}: (m,b)\in P  \}.$$
As long as for $(m,b)\in P$ one has $m\neq 0$, one can regard $L_P$ as a subset of the affine group $F_*^2$ over $F$.
 Clearly  $|L_P|=|P|.$

Hence, generalising \eqref{eq:1}, \eqref{eq:2} to the case of three sets $A,B,C$ we have
 \begin{eqnarray}\label{products}
 BA+C&=&\{ba+c\colon a\in A,\, (b,c)\in B\times C\}\\
 &=&L_P(A), \nonumber
 \end{eqnarray}
 where $P=B\times C$.
  \begin{eqnarray}\label{sums}
 B(A+C)&=&\{ba+bc\colon a \in A, (b,c)\in B\times  C\}\\
 &=&L_P(A), \nonumber
  \end{eqnarray}
  where $P=\{(b, bc)\colon b\in B, c \in C\}$. Note that the set of pairs of points $(b,bc)$ is a $|B|\times|C|$ grid. Namely, the former points lie at the intersections of lines through the origin with slopes $c$ and vertical lines $x=b$.  By a $m\times n$ grid we mean the intersection of two line plane pencils of $m$ and $n$ points, respectively.

  In both examples above we would like to replace the notation $L_P$ with  $L_{B\times C}.$

In the sequel we write $L_P\simeq L_{B\times C}$ for a set of lines when the set $P$ of covectors defining the lines in $L_P$ is {\em projectively equivalent} to a $|B|\times |C|$ grid. This,  in particular implies that $P$ has at most $\max(|B|,|C|)$ collinear points -- something we will have to watch apropos of the parameter $k$ in Theorem \ref{mish}.

In this context we  use more energy notation
$$\E(L,A)=|\{(\ell, \ell', a, a')\in L^2\times A^2:\ell(a)=\ell'(a')\}|.$$
Here if  $L=L_P$, then
$$\E(L,A)=|\{(ma+b=m'a'+b': (m,b), (m, b')\in P;\, a,a'\in A\}|.$$

\subsection{Image set  theorem}
\label{sec:upper-bound-ELA}
We are now ready to  state and prove the energy bound which we call the image set theorem.
\begin{theorem}\label{main}
Let $P\subset F_*^2$, and  $A\subset F_*$, such that $|P||A|=O(p^2)$. Let $L=L_P\simeq L_{B\times C}$. Then
\begin{equation}
\label{enest}
\E(L,A)\ll |L|^{\frac{3}{2}}|A|^{\frac{3}{2}}+k|L||A|,
\end{equation}
where
\begin{equation}
  \label{eq:11}
  k \leq\max\{|A|,|B|,|C|\}.
\end{equation}

Hence
\begin{equation}
\label{sizest}
|L(A)|\gg \min\{{\sqrt{|L||A|},|L||A|k^{-1}}\}.
\end{equation}
\end{theorem}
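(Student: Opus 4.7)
The plan is to interpret the collision equation
\[
ma+b = m'a' + b', \qquad (m,b),(m',b')\in P,\ a,a'\in A,
\]
as a point--plane incidence problem in $F^3$ and apply Theorem~\ref{mish}. Concretely, I would freeze the variables $(m,b)\in P$ and $a'\in A$: the resulting equation is linear in the remaining triple $(a,m',b')\in F^3$, so it defines a plane. This gives a set $\Pi$ of $|P||A|=|L||A|$ planes (one for each $((m,b),a')\in P\times A$). On the other side, take the point set $Q=A\times P\subset F^3$, which also has $|A||P|=|A||L|$ elements. A point $(a,m',b')\in Q$ lies on the plane indexed by $((m,b),a')$ precisely when $ma+b=m'a'+b'$, so the number of incidences $|I(Q,\Pi)|$ counts exactly $\E(L,A)$.

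The hypothesis $|P||A|=O(p^2)$ is exactly the cardinality constraint of Theorem~\ref{mish}, and $|Q|=|\Pi|=|A||L|$, so that theorem yields
\[
\E(L,A)=|I(Q,\Pi)|\ll (|A||L|)^{3/2}+k(|A||L|),
\]
where $k$ is the maximum number of collinear points in $Q$. It remains to control $k$: given a line in $F^3$, either it is parallel to the $a$-axis, in which case it contains at most $|A|$ points of $A\times P$, or it projects bijectively onto an honest line $\ell$ in the $(m',b')$-plane, and the count of points on that line is at most $|\ell\cap P|$. Here the structural hypothesis $L_P\simeq L_{B\times C}$ enters: since projective maps send lines to lines, $P$ contains at most $\max(|B|,|C|)$ points on any line of $F^2$ (the ``axis-parallel'' lines of the grid give the two extremes, and every other line meets each row or column at most once). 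Therefore $k\le \max\{|A|,|B|,|C|\}$, giving \eqref{enest} and \eqref{eq:11}.

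The bound \eqref{sizest} on $|L(A)|$ follows by a standard Cauchy--Schwarz dualisation. Writing $r(y)=|\{(\ell,a)\in L\times A:\ell(a)=y\}|$, one has $\sum_{y\in L(A)}r(y)=|L||A|$ and $\sum_{y}r(y)^2=\E(L,A)$, so
\[
(|L||A|)^2\le |L(A)|\,\E(L,A),
\]
which after dividing by the two terms on the right of \eqref{enest} separately yields
\[
|L(A)|\gg\min\{\sqrt{|L||A|},\ |L||A|/k\}.
\]

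The only genuinely non-routine step is the bound on the collinearity parameter $k$; once one recognises that $P$ being projectively equivalent to a combinatorial grid restricts the richest lines in $F^2$ to the grid lines themselves, the rest is a direct application of Theorem~\ref{mish} followed by Cauchy--Schwarz. The substantive aspect of the statement, and the reason it is so useful in the later sections, is that any set $L$ whose covector set is a projective grid admits this clean incidence interpretation without loss in the exponent coming from $k$.
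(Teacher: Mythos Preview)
Your proof is correct and follows essentially the same route as the paper. You freeze $((m,b),a')$ to define planes and take $Q=A\times P$ as points, whereas the paper freezes $((m',b'),a)$ for the planes and takes $Q=P\times A$ as points; by the symmetry of the collision equation these are the same construction up to relabelling. Your treatment of the collinearity parameter $k$ is in fact slightly more explicit than the paper's, which simply asserts $k\le\max\{|A|,|B|,|C|\}$ after having remarked earlier that the projective-grid hypothesis forces at most $\max(|B|,|C|)$ collinear covectors in $P$.
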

We refer to the former term in the estimates of Theorem \ref{main} as the main term; in all the forthcoming applications it will dominate the second term.

We record an immediate corollary which  subsumes Theorem 3 and Corollary 4 in \cite{RNRS}, which did provide the following bound for the set
$BA+C$.
\begin{corollary}
  \label{cor:1}
Given $A,B,C\subseteq F$, let $M=\max(|A|,|B|,|C|)$. Then
\[
|BA+C|,\;|B(A+C)|\gg\min\left(\sqrt{|A||B||C|}, M^{-1}|A||B||C|, p \right).
\]
\end{corollary}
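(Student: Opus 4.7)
The plan is to apply Theorem \ref{main} in the two natural configurations, one for each set, and then incorporate the $p$ cutoff by passing to subsets. For $|BA+C|$, I take $P=B\times C$: by \eqref{products} the associated line set $L_P$ gives $L_P(A)=BA+C$, and since $P$ is already a Cartesian $|B|\times|C|$ grid, $L_P\simeq L_{B\times C}$ tautologically, with $k\le\max(|A|,|B|,|C|)=M$. For $|B(A+C)|$, I use the parametrisation from \eqref{sums}, taking $P=\{(b,bc):b\in B,\,c\in C\}$, so that the lines in $L_P$ have the form $\ell(x)=b(x+c)$ and $L_P(A)=B(A+C)$. Although $P$ is not a Cartesian grid, it sits at the intersection of the $|B|$ vertical lines $x=b$ with the $|C|$ concurrent lines $y=cx$ through the origin; a projective transformation sending these two pencils to the horizontal and vertical pencils realises the required equivalence $L_P\simeq L_{B\times C}$, and in particular the maximum number of collinear points in $P$ is $\max(|B|,|C|)\le M$. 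In both cases Theorem \ref{main} then yields, assuming $|A||B||C|=O(p^2)$,
\[
|BA+C|,\ |B(A+C)|\gg\min\!\left(\sqrt{|A||B||C|},\ \frac{|A||B||C|}{M}\right).
\]

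To produce the $p$ in the final minimum, I handle the regime $|A||B||C|>p^2$ by a short subsetting argument: replace $A,B,C$ by subsets $A'\subseteq A$, $B'\subseteq B$, $C'\subseteq C$ with $|A'||B'||C'|\asymp p^2$ and $\max(|A'|,|B'|,|C'|)\le p$, by first truncating any factor that exceeds $p$ down to $p$, then further trimming the largest surviving factor until the product has dropped to $\asymp p^2$. Applying the previous step to $(A',B',C')$ yields a lower bound of order $\min(p,\,p^2/M')\gg p$ since $M'\le p$, and the monotonicity $|L_P(A)|\ge|L_{P'}(A')|$ transfers the estimate back to the original sets. The three regimes combine into the stated minimum.

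The main technical point is verifying the projective equivalence $L_P\simeq L_{B\times C}$ when $P=\{(b,bc)\}$, which reduces to the classical fact that two concurrent line pencils in $FP^2$ can be simultaneously mapped to two parallel pencils by a projectivity; this is precisely what is needed to control the collinearity parameter $k$ in Theorem \ref{main} by $\max(|B|,|C|)$ rather than by something closer to $|B||C|$. Everything else is a direct invocation of the image set theorem together with the subsetting reduction above.
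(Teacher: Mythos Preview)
Your argument is correct and essentially identical to the paper's: for $B(A+C)$ you use the same $P=\{(b,bc)\}$ and the same projective-grid observation, and for $BA+C$ the Cartesian $P=B\times C$; the paper only writes out the $B(A+C)$ case and declares the other ``similar''. One small wrinkle in your subsetting step: it is not always possible to achieve simultaneously $|A'||B'||C'|\asymp p^2$ and $M'\le p$ --- take $|A|\gg p$ with $|B|\,|C|<p$. In that regime, however, the minimum in the statement is $|A||B||C|/M=|B||C|$ rather than $p$, and applying Theorem~\ref{main} directly to the sets truncated at $p$ (without further trimming) already yields $|B||C|$, so the conclusion is unaffected. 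The paper's proof is equally brief on this point.
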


\begin{proof}
Let $P=\{(b,bc)\colon b\in B, c\in C\}$, so that
\[
B(A+C)=L_P(A),
\]
as in \eqref{sums}.
Since $L_P\simeq L_{B\times C}$, Theorem~\ref{main} implies that
\begin{equation}
  \label{eq:14}
  |B(A+C)|=|L_P(A)|\gg\min\{{\sqrt{|B||C||A|},|B||C||A| k^{-1}}\},
\end{equation}
where $k=M\leq\max\{|A|,|B|,|C|\}$ and $|A||B||C|<p^2$.

If $|A||B||C|>p^2$, then we may refine to subsets $A'\subseteq A, B'\subseteq B,$ and $C'\subseteq C$ such that $|A'||B'||C'|\approx p^2$.
Applying \eqref{eq:14} to $A',B',$ and $C'$ yields
\[
|B(A+C)|\geq |B'(A'+C')|\gg p.
\]

This completes the proof of the lower bound for $|B(A+C)|$.
The proof of the lower bound for $|BA+C|$ is similar.
\end{proof}

\subsection{Proof of Theorem \ref{main}}
\label{sec:proof-theor-main}

Now we give the proof of Theorem \ref{main}, following the sketch in Section \ref{sec:ideas-behind-main}.
\begin{proof}
Let
\begin{equation}
  \label{eq:6}
  Q=\{(m,b,a'): (m,b)\in P,\,a'\in A\},
\end{equation}
and
\begin{eqnarray}
\label{eq:7}
 \Pi&=&\{\pi: ax+y=m'z+b': (m',b')\in P,\, a\in A\}\\
 &=&\{\pi: ax+y-m'z=b': (m',b')\in P,\, a\in A\}\nonumber
 \end{eqnarray}
be sets of points and planes in $F^3$, respectively.

Then
$$|Q|=|\Pi|=|P||A|=|L||A|$$ and since $L_P\simeq L_{B\times C}$, the maximum number of collinear points or planes is $$k\leq\max\{|B|,|C|,|A|\}$$ where $B$ and $C$ are defined as before.

It follows from Theorem \ref{mish} that
 $$|I(S,\Pi)|=\E(L,A)\ll (|L||A|)^{3/2}+ k|L||A|,$$
which proves the first part of the theorem.

Now let $$r_{L(A)}(y)=|\{((m,b), a)\in P\times A: y=ma+b\}|.$$ Then by the Cauchy-Schwarz inequality
\begin{eqnarray*}
|L|^2|A|^2&=&\left(\sum_{y}r_{L(A)}(y)\right)^2\\
&\leq&|L(A)|\sum_{y}(r_{L(A)}(y))^2\\
&=&|L(A)| \E(L,A)\\
&\ll&|L(A)| [ (|L||A|)^{3/2}+k|L||A|].
\end{eqnarray*}
Therefore,
$$|L(A)|\gg \min\{\sqrt{|L||A|}, |L||A|k^{-1}\},$$
and the proof is complete.
 \end{proof}


\subsection{Technical remarks}
\label{sec:extens-energy-bound}
We make a few technical notes as to the  forthcoming proofs.

\subsubsection{Weighted version of (\ref{enest})}

\renewcommand{\thetheorem}{\arabic{theorem}}
Theorem \ref{mishw} enables one to use the energy bound \eqref{enest} of Theorem \ref{main} with weighted sets of points and lines.
We will do this on a case-by-case basis, rather than formulate a general theorem.
See the proofs of Propositions \ref{prop:1}, \ref{prop:5}, and \ref{prop:6} for examples.

\subsubsection{Bounding the constant $k$ from Theorem \ref{mish} in collision estimates}

The upper bound \eqref{eq:11} for the maximum number $k$ of collinear points or planes can be formulated in a number of ways.

The most general condition is that $k$ is less than the max of $|A|$ and number of lines of $L$ contained in a pencil. Recall that a pencil of lines is a set of lines whose defining covectors are collinear. Thus,
\[
k\leq \max\{|A|, M\}
\]
where $M$ is the maximum number of lines in $L$ whose co-vectors are collinear.
Said differently, $M$ is the maximum number of concurrent or parallel lines in $L$.

If the set of lines $L$ is projectively equivalent to a grid, $L\cong L_{B\times C}$, then
\[
M\leq\max\{|B|,|C|\},
\]
since projective grids have at most $|B|$ or $|C|$ lines in a pencil.

The following upper bound does not require $L$ to be projectively equivalent to a grid; it says that $M$ is less than either (1) the number of slopes of lines in $L$, or (2) the largest number lines with a given slope.
To state this precisely, suppose that $L=L_P$ for some subset $P$ of $F_*\times F$, and let $S$ be the set of slopes of lines of $L$ (that is, $S$ is the set of first coordinates of $P$). Then
\begin{equation}
  \label{eq:12}
  M\leq\max\left\{|S|, \max_{m\in S}|\{(m,b)\in P\}|\right\}.
\end{equation}
We will use this estimate in the proof of Proposition \ref{prop:5}.

\section{Proofs of the Main Results}
\label{sec:proofs-sum-product}

All the proofs  follow a common pattern: to bound the ``energy'' of a two-variable expander, we multiply by 1 or add 0 to introduce more variables. We then interpret the problem as a collision count and bound the resulting expression using Corollary \ref{cor:1} of Theorem \ref{main} as a tool, or  Theorems \ref{mish}, \ref{mishw} if the scenario is slightly more unwieldy (rather than generalising Theorem \ref{main} to account for it). When we end up with lower bounds for multiple sumsets we use the Pl\"unnecke-Ruzsa inequalities to go down in the number of summands.

\subsection{Sum-product applications}
\label{sec:sum-prod-pf}

These results are stated in Section \ref{sec:sum-product-results}.

\subsubsection{Proof of Proposition \ref{prop:2}}

Recall that Proposition \ref{prop:2} states that for $|A|<p^{3/5}$, one has \[ |A+A|^3|AA|^2\gg |A|^6.\]

\begin{proof}
We will prove that
\begin{equation}
  \label{eq:10}
  \E^\times(A)\ll |A||A+A|^{3/2}
\end{equation}
under suitable hypotheses.
Once we establish \eqref{eq:10}, Proposition \ref{prop:2} follows by Cauchy-Schwarz:
\[
\frac{|A|^4}{|AA|}\leq \E^\times(A)\ll  |A||A+A|^{3/2}\implies |A+A|^3|AA|^2\gg |A|^6.
\]

To prove \eqref{eq:10}, first we bound the multiplicative energy by an average over an expression with more variables.
$$\begin{aligned} \E^\times(A) & = |\{ ab = a'b' ~:~ a,\dots, b'\in A \}| \\ & \le |A|^{-2} | \{ (a+c)b -cb  = (a'+c')b'- c'b'  ~:~ a,\ldots, c' \in A \}|. \\
& \leq
 |A|^{-2}| \{ sb -cb  =s'b'- c'b'  ~:~ b,b',c,c'\in A;\, s,s' \in A+A \}|. \end{aligned}$$
Next, we note that expression
\[
\{ sb -cb  =s'b'- c'b'  ~:~ b,b',c,c'\in A;\, s,s' \in A+A \}|
\]
equals $\E(L,A+A)$, where $L=L_P$ is the set of $|A|^2$ lines defined by \[P=\{(b,cb)\colon b,c\in A\}.\]
As in \eqref{sums}, we have $L_P\simeq L_{A\times A}$, thus Theorem~\ref{main} implies that
 \begin{equation}\label{app}
 \E(L,A+A)\ll  |A|^3|A+A|^{3/2} + |A+A|^2|A|^2,
 \end{equation}
assuming that $|A|^2|A+A|<p^2$.

The second term of \eqref{app} can be discarded, since otherwise $|A+A|\gg |A|^2$.
To deal with the constraint $|A|^2|A+A|<p^2$, note that Proposition \ref{prop:2} is satisfied trivially if $|A+A|>|A|^{4/3}$, and otherwise if $|A+A|\leq |A|^{4/3}$, then the constraint is satisfied if $|A|<p^{3/5}$, since $|A|^2|A+A|\leq |A|^{10/3}$.

All together, we see that if $|A|<p^{3/5}$, then either Proposition \ref{prop:2} is trivially true, or \eqref{eq:10} holds, thus the proof is complete.
 \end{proof}
The proof implies that for the slightly higher threshold of $|A|<p^{5/8}$, one has the inequality
\begin{equation}
 |A+A|+|AA|\gg|A|^{6/5},
\label{speq}\end{equation}
since then Theorem \ref{main} can be applied as above under the stronger constraint $|A+A|<|A|^{6/5}$.
 Inequality \eqref{speq} was already established in \cite{RNRS} with the roles of multiplication and addition reversed in its proof.

\begin{remark}
    It was proved in \cite{RNRS} that
\begin{equation}\label{f:14.12.2015_0}
    \E (A,B) \ll (|A| |BC|)^{3/2} |C|^{-1/2} + M |A| |BC| |C|^{-1} \,,
\end{equation}
    where $M=\max\{ |A|, |BC|\}$ and $C$ is any set such that $|A||C||BC| < p^2$.
The proof of Proposition \ref{prop:2} gives us a dual inequality for the multiplicative energy
\begin{equation}\label{f:14.12.2015_1}
    \E^\times (A,B) \ll (|A| |B+C|)^{3/2} |C|^{-1/2} + M |A| |B+C| |C|^{-1} \,,
\end{equation}
    where $M=\max\{ |A|, |B+C|\}$ and $C$ is any set with  $|A||C||B+C| < p^2$.

Finally, there is another new bound for the energy (see the proof of Proposition \ref{prop:6} below)
\begin{equation}\label{f:14.12.2015_2}
    \E^\times (A,B) \ll (|A| |A(B+\alpha)C|)^{3/2} |C|^{-1/2} + |A| |A(B+\alpha)C|^2 |C|^{-1} \,,
\end{equation}
provided that $|A||C||A(B+\alpha)C| < p^2$, and  $\alpha \neq 0$ is any number.
We think that the estimates (\ref{f:14.12.2015_1}), (\ref{f:14.12.2015_2}) will be useful in further sum--products estimates (applications of bound (\ref{f:14.12.2015_0}) can be found in \cite{RNRS} and \cite{Z}).

In addition, Proposition \ref{prop:6} -- see the forthcoming proof -- provides a strong bound for an important characteristic of a set, that is the number $\mathrm{T} (A)$ of {\it collinear triples} in $A\times A$. See e.g. \cite{TV}, \cite{KS}, \cite{Sh_diff} for various applications of the number $\T(A)$. The quotient is obviously connected with sum--products questions in view of a simple formula $\T(A) = \sum_{a,b\in A} \E^\times (A-a,A-b)$.
\end{remark}

\subsubsection{Proof of Proposition \ref{prop:3}}

Recall that Proposition \ref{prop:3} states that for $A,B\subset F_*$, such that  $|B|\leq |A|\leq|B|^2$, $|A|^{1/3}|B|^{4/3}<p$ and $(\alpha,\beta)\in F^2\setminus\{0,0\}$,  one has \begin{equation}\label{need} |(A+\alpha)/(B+\beta)|^2 |A/B|^3 \gg |A|^4|B|^2.\end{equation}

\begin{proof}
The proof is similar to  Proposition \ref{prop:2}.  Let $\E$ be the number of solutions of the equation
  $$
  \frac{a+\alpha}{b+\beta} =   \frac{a'+\alpha}{b'+\beta},\qquad a,a'\in A;\, b,b'\in B\setminus\{-\beta\}.
  $$
  Rearranging yields
  $$
  ab'+ a\beta + b'\alpha =  a'b+ a'\beta + b\alpha.
  $$
Assuming without loss of generality that $\alpha\neq 0$, we replace $a=c'd'/d'$ and $a'=cd/d$, with $d,d'\in B$. Then $\E$ is bounded by $|B|^{-2}$ times the number of solutions of the following equation:
  $$
  e(b+\beta)d + b\alpha =   e'(b'+\beta)d' + b'\alpha~:~ b,d,b',d'\in B;\,e,e'\in A/B.
  $$
  The latter quantity equals $\E(L, A/B)$, where the set of lines $L=L_P$, with
  $$P=\{(\,(b+\beta)d, b\alpha):\,b,d\in B\}$$ is projectively equivalent to a  $|B|\times  |B|$ grid.

Applying Corollary  \ref{cor:1}, we obtain that given that $|B|^2|A/B|<p^2$,
$$
\E\ll |B|^{-2} \left( |B|^3 |A/B|^{3/2} + |A/B|^2|B|^2 \right) \ll |B| |A/B|^{3/2},
$$
provided that $|A/B|<|B|^2$. Otherwise the claim of the proposition holds trivially, for the left-hand side of \eqref{need} can be bounded from below by $|A|^2|B|^6\geq |A|^4|B|^2.$

Then by Cauchy-Schwarz,
$$
|(A+\alpha)(B+\beta)|^2 |A/B|^3 \gg |A|^4|B|^2.
$$
It remains to observe that if $|A/B|\gg |A|^{2/3}|B|^{2/3}$, then inequality \eqref{need} holds trivially. Otherwise the constraint $|B|^2|A/B|<p^2$ becomes $|A|^{1/3}|B|^{4/3}<p$, as claimed in the statement of the proposition.

Also observe that once we assume that $|A/B|\ll |A|^{4/5} |B|^{2/5}$, the constraint $|B|^2|A/B|<p^2$ improves to $|A|^{2/5}|B|^{6/5} < p,$ and inequality \eqref{need} still holds. We thus conclude that
\begin{equation}
 |A/B|+|(A+\alpha)/(B+\beta)|\gg |A|^{4/5}|B|^{2/5},\qquad \mbox{if }\;|A|^{2/5}|B|^{6/5} < p,
\label{speq1}\end{equation}
as an analogue of  \eqref{speq}.  \end{proof}

\subsubsection{Proof of Corollary \ref{cor:5}}
Recall that Corollary~\ref{cor:5} states that
  \begin{enumerate}
\item if $|A|<p^{2/3}$,  then \[ |A+AA|,\, \;|A(A+ A)| \gg |A|^{3/2},\]
\item and if $|A|<p^{5/8}$, then \[ |AA+AA|+|(A+A)(A+A)| \gg |A|^{8/5}.\]
  \end{enumerate}
The proof of Corollary  \ref{cor:5} is an application of Corollary \ref{cor:1}.

\begin{proof}
Part (a) of Corollary~\ref{cor:5} follows immediately from Corollary~\ref{cor:1} by setting $A=B=C$.

To prove part (b), we apply Corollary \ref{cor:1} to the sets $A+A, A, A$ and the sets $A, A, AA$ to obtain
\begin{equation}
  \label{eq:15}
  |(A+A)(A+A)|\gg\min\{|A|\sqrt{|A+A|}, |A|^2, p\}
\end{equation}
and
\begin{equation}
  \label{eq:16}
    |AA+AA|\gg\min\{|A|\sqrt{|AA|}, |A|^2, p\},
\end{equation}
assuming that
\begin{equation}
|A|^2\max(|A+A|,|AA|)< p^2.
\label{cnst}\end{equation}

Now, either $|AA|+|A+A|\gg |A|^2$, and there is nothing to prove, or one has
\begin{equation}\label{alm}
|(A+A)(A+A)|  \gg |A|\sqrt{|A+A|},\qquad |AA+AA|  \gg |A|\sqrt{|AA|}.
\end{equation}
By the sum-product estimate  \eqref{speq}, for $|A|<p^{5/8}$ one has
$$
\max(|A+A|,|AA|)\gg |A|^{6/5}.
$$
The claim of Corollary \ref{cor:5} now follows, after invoking the sum-product estimate  \eqref{speq}, provided that \eqref{cnst} is satisfied. If it is not satisfied, we may refine $A$, throwing away element by element, until \eqref{cnst} becomes satisfied as an equality up to a constant. Then inequalities \eqref{alm} become true for the refinement $A'$ of $A$ in question, with the right-hand side being replaced by $p\gg|A|^{8/5}.$

\end{proof}

\subsubsection{Proof of Proposition \ref{prop:1}}

Recall that Proposition \ref{prop:1} states that for $|A|<p^{3/5}d^{1/5}$ and an integer $0<d<|A|$, either $|AA|\gg
|A|^2/d$, or
\[
|AA|^3|A^d+A^d|^2\gg |A|^6/d.
\]

\begin{proof}
Let
$$
\E_d(A) =  |\{ a^d+b^d = c^d+e^d~:~ a,b,c,e\in A \}|.
$$
We have, with $f,g\in A$:
 \begin{equation}\begin{aligned}
  \E_d(A) & =  |A|^{-2} |\{ a^d+(bf)^d/f^d  =  c^d+(eg)^d/g^d \}| \\
  &\leq |A|^{-2} | \{ a^d + h^d / f^d  = c^d + k^d/g^d ~:~ a,f,c,g\in A;\, h,k \in AA \}|.\end{aligned}
  \label{rear}\end{equation}
 The latter equation counts incidences between multi-sets of points in $F^3$, with coordinates $(a^d, 1/f^{d}, k^d)$ and planes with equations $x+ h^d y - (1/g^d)z = c^d$.  The points and planes can have weights (multiplicities), due to the fact that one can have up to $d$ solutions to the equation say $a^d=t$ in $A$. The maximum number $k$ of collinear points or planes is $|AA|$, the total weight $W=|A|^2 |AA|$ and the maximum weight is $w=d$. Applying Theorem \ref{mishw}, assuming $|A|^2|AA|\ll dp^2$, yields
 $$
   \E_d(A) \ll |A|^{-2} ( \sqrt{d} |A|^3 |AA|^{3/2}+ d|A|^2|AA|^2).
 $$
Assuming that $|AA| < |A|^2/d$ we can disregard the second term in the latter estimate.

By Cauchy-Schwarz one has
 $$
 |A^d+A^d| \gg |A|^4/ \E_d(A),
 $$
 whence Proposition \ref{prop:1} follows, provided that
 $$
 |A|^2|AA|<p^2d.
 $$
Note that if $|AA|>|A|^{4/3}d^{1/3}$ then Proposition \ref{prop:1} is true trivially, for $|A^d+A^d| \geq |A|/d$. Otherwise, to comply with the above constraint we need $|A|^2 |A|^{4/3}d^{1/3} < p^2d$, as stated in the proposition.
\end{proof}

\subsection{Line geometry estimates}

Let $\T(A)$ denote the number of collinear triples in the point set  $P=A\times A$ in $F^2$.
The first nontrivial bound for $\T(A)$ in finite fields setting was proved in \cite{Sh_diff}.
Here we have a much stronger result.

\subsubsection{Proof of Proposition \ref{prop:6}}
Recall that Proposition \ref{prop:6} claims that if $|A|<p^{2/3}$, one has $\T(A)\ll |A|^{9/2}.$

\begin{proof} Let $P=A\times A$. The collinearity of three points $(a,a'),(b,b'),(c,c')\in P$, where we assume $a,c\neq b$; $a',c'\neq b'$, is expressed by the condition
\begin{equation}
\det\left( \begin{array}{ccc} 1 & 1& 1\\ a & b & c\\ a' & b' & c' \end{array}\right) = 0,
\label{zdet}\end{equation}
which means $(a-b)/(c-b)=(a' -b')/(c' -b')$. Thus
\begin{equation}
  \label{eq:13}
  \T(A) \ll \left|\{(a,\ldots,c')\in A^6\colon \frac{a-b}{c-b}=\frac{a'-b'}{c'-b'}\neq 0,\infty\}\right|+|A|^4.
\end{equation}

Let $L=L_P$ where
\[
P=\{(1/(c-b), -b/(c-b))\colon c,b\in A\}.
\]
That is, $L$ is the set of lines of the form \[\ell_{c,b}(t)=\frac{t-b}{c-b},\]
with $c$ and $b$ in $A$.

By \eqref{eq:13} and our definition of $L$, it follows that $\T(A)\ll \E(L,A)+|A|^4$.
Proposition~\ref{prop:7} will follow from Theorem \ref{main} if we can show that $|L|=|A|^2$ and $k\leq |A|$, since then
\[
\E(L,A)\ll (|A|^3)^{3/2}+|A|^4 \ll |A|^{9/2}.
\]

First $|L|=|A|^2$, since every $(x,y)\in P$ corresponds to a unique pair $(c,b)$ in $A\times A$, where
\[
b=-\frac yx \quad\mbox{and}\quad c =\frac 1x -\frac yx.
\]
Second, to show that $k\leq |A|$ we must show that at most $k$ points of $P$ are collinear.
Consider the linear equation $Ax+By=C$ with $A,B,$ and $C$ fixed; suppose one of $A,B$  equals $1$.
Plugging in $x=1/(c-b)$ and $y=-b/(c-b)$ yields the equation
\[
A-Bb=C(c-b),
\]
which has at most $|A|$ solutions $(b,c)$, as required.
\end{proof}

\subsubsection{Proof of Corollary \ref{cor:n}}

Let $\mathcal L(P)$ as the set of lines connecting pairs of distinct points of $P=A\times A$ and $R(A)$ the set of cross-ratios defined by $A$.
There are two statements we wish to prove, assuming $|A|<p^{2/3}$:
\begin{enumerate}
\item $P$ determines $\Omega(n^{3/2})$ distinct lines between pairs of its points, in $\Omega(n^{3/4})$ distinct directions.  For every $q\in P$, but possibly one point, there are $\Omega(n^{6/10})$ distinct lines supporting $q$ and some other point of $P$.
\item $P$ forms $O(n^{3/4}m^{2/3}+m)$ incidences with a set of $m$ lines in $F^2$.
\item The  set $A\in FP^1$ determines $\Omega(|A|^{3/2})$ distinct cross-ratios, pinned at any point of $A$.
\end{enumerate}

\begin{proof}
In Statement (a), the claim about distinct directions follows from Corollary  \ref{cor:1}, replacing $B(A+C)$ by  for a set $(A-A)(A-\alpha)^{-1}$, for any $\alpha$.
The statement  about the minimum size of the set $\mathcal L(P)$ follows from Proposition \ref{prop:6} and the H\"older inequality:
$$
|A|^4 \ll \sum_{l\in \mathcal L(P)} n^2(l) \ll |\mathcal L(P)|^{1/3} \T(A)^{2/3},
$$
where $n(l)$ is the number of points of $P$ supported on line $l\in \mathcal L(p)$.

Moreover, let $(\alpha, \beta)\in A\times A$ be such that $m=|(A-\alpha)/(A-\beta)|$ is minimal. If $m>|A|^{6/5}$, there is nothing to prove. Otherwise, let $A'= A-\alpha $, $B'=B-\beta$, and we can apply formula \eqref{speq1} in the form:
$$
\forall (\alpha', \beta')\neq (0,0), \qquad \left|(A'-\alpha')/(B'-\beta')\right|\gg |A|^{6/5}.
$$
This completes the proof of climes in statement (a).

\medskip The statement (b) on the number of incidences between $P$ and a set of $m$ lines in $F^2$ follows from Proposition \ref{prop:6} in the standard way. The Proposition implies that for $k\geq 3$, the numbers $x(k)$ of lines supporting $k$ or more points is $O(n^{9/4}/k^3)$. Given $m$ lines, we arrange them by non-decreasing popularity $k$ in terms of the number of points $k$ of $P$ supported on a line. The $x$th line on the list supports, inverting the above estimate, $k(x)\ll n^{3/4}x^{-1/3}$ points. Integrating, the number of incidences between lines indexed by $x=1$ and $x=x_0$ is $O(n^{3/4}x_0^{2/3})$, and $x_0\leq m$. In this count we have not included lines with two or fewer points of $P$, and their contribution is at most $2m$.

Finally, statement (c) also follows from Proposition \ref{prop:6} by observing that the quantity  $r=(x-a)/(z-a)$ appearing in \eqref{eq:13} is a cross-ratio of $x,a,\infty$, and $z$ and applying the Cauchy-Schwarz inequality:
$$
|R(A)|\geq |R_\infty(A)| \geq  |A|^6/\T(A),
$$
where $R_\infty(A) $ is the set of cross-ratios pinned at infinity.

Note that $\infty$ does not have to be in $A$, for any $a\in A$ can be mapped to $\infty$ by a M\"obius transformation $\frac{1}{z-a}$, which leaves the set $R(A)$ invariant.
\end{proof}

\subsubsection{Proof of Corollary \ref{cor:y}} Recall that Corollary \ref{cor:y} states that If $2\leq |A|\ll p^{2/3}$, then for at least $|A|/2$ values of $a\in A\subset F_*$, $|A(A-a)|\gg|A|^{5/4}.$ 

\begin{proof} Let $A'\subset A$ be such that for every $a\in A'$, 
\begin{equation}\E^\times(A,A-a)  \geq \left|  \left\{ \frac{b}{a-c} = \frac{b'}{a-c'}:\;b,b',c,c' \in A\setminus\{a\}\right\}\right|\;\geq \; C|A|^{11/4},\label{let}\end{equation} for some sufficiently large $C$. Suppose, $|A'|> |A|/2$. If this is brought to a contradiction, the claim of the corollary follows by Cauchy-Schwarz inequality for every $a\in A\setminus A'.$

The assumption \eqref{let}  implies that the set of at most $m=2C^{-1}|A|^{9/4}$ lines  bushing at points $(a,0)\in F^2$ with $a\in A'$, is such that the  union of these lines supports at least  $C|A|^{15/4}/16$ pairs of points of $P=A\times A$.

However, as in the proof of (b) of the previous corollary, if the lines are arranged as a list by non-decreasing popularity $k(x)$ in terms of the number of points $k$ of $P$ supported on the $x$th line, the $x$th line on the list supports
only $O(|A|^3x^{-2/3})$ pairs of points of $P$. Integrating to $x_0=2C^{-1} |A|^{9/4}$ we obtain a contradiction for a suitably large $C$. \end{proof}

\subsubsection{Proof of Corollary \ref{cor:m}} \begin{proof}This is a straightforward generalisation of the corresponding arguments in the proof of Proposition \ref{prop:6} and Corollary \ref{cor:n}. 

In equation \eqref{zdet} one has now $a,b,c\in A$, $a',b',c' \in B$. For $B$ sufficiently small versus $A$ there is an extra term to be included in the bound:  \eqref{zdet} has $|A|^3|B|$ trivial  solutions, corresponding to the case $a'=b'=c'$. Thus the number of solutions of \eqref{zdet} is bounded, by Cauchy-Schwarz, as
$$O(\sqrt{\T(A)\T(B)} +  |A|^3|B|) = O( n^{9/4} + |A|^3|B|),$$
by Proposition \ref{prop:6}.

The contribution of the additional term plays in the incidence estimate is easy to take into account as $O(n)$, for it corresponds to horizontal lines only, whose number is at most $|B|$.  \end{proof}

\subsection{Products of translates \(A(A+\alpha)\)}

\subsubsection{Proof of Proposition \ref{prop:4}}
Recall that Proposition \ref{prop:4} states that for $|A|<p^{8/13}$ and $\alpha\in F_*$, one has \[| A(A+\alpha)| \gg |A|^{9/8}.\]

 \begin{proof}
Suppose, $|A|>C$ for some large enough absolute $C$.
Set $S=A(A+\alpha)$, $|S|=K|A|$.  Use Lemma \ref{l:plunnecke-ruzsa} to refine $A$ to a large subset $A'$ (say with cardinality $.99|A|$), so that
 $|A'(A'+\alpha)(A'+\alpha)| \ll K^2|A|.$  Without loss of generality  $-\alpha\not\in A'$. Replace $A$ with $A'$ using the same notations $A,S$.

 Let $\E$ be the number of solutions of the equation
$$
a(b+\alpha)= a'(b'+\alpha)~:~ a,b,a',b'\in A.
$$
Write, for $c\in A$:
$$
\alpha^2 + a(b+\alpha) = (a+\alpha)  (b+\alpha) - \alpha b = c(a+\alpha)  (b+\alpha)/c - \alpha b.
$$
It follows that for $Z= A(A+\alpha)(A+\alpha)$, we have $\E$ bounded by $|A|^{-2}$ times the number of solutions of the equation
$$
z/c -\alpha b = z'/c' -\alpha b'   ~:~ b,c,b',c'\in A;\,z,z' \in Z.
$$
That is we have $\E\leq|A|^{-2}\E(L,Z)$, where $L\simeq L_{A\times A^{-1}}$.  Assume that
$K^2|A|^3<p^2$, which is necessary to apply Theorem \ref{main}/Corollary \ref{cor:1}. It is easy to see that  the main term in its estimate dominates because otherwise $K \gg |A|^{1/2}$ and the result follows immediately.

Thus
$$
\E\ll |A|^{-2} |A|^{9/2} K^{3}.
$$
Then by Cauchy-Schwarz
\begin{equation}\label{fin}
K^8 \gg |A|,
\end{equation}
and Proposition \ref{prop:4} follows,  for assuming that $|A|<p^{8/13}$ and that \eqref{fin} fails implies that the applicability threshold $K^2|A|^3<p^2$ of Theorem \ref{main}/Corollary \ref{cor:1} holds, which ensures \eqref{fin}.
\end{proof}

\subsubsection{Proof of Corollary \ref{cor:3}}

Recall that Corollary \ref{cor:3} states that if $|A|<p^{8/13}$ and $|AA| \le M|A|$ for some real number $M\ge 1$, then for any $\alpha \neq 0$ one has
$$
    |A \cap (A+\alpha )| \ll  (M|A|)^{8/9} \,,
$$
and if $|A|<p^{2/3}$ then $|A \cap (A+\alpha )| \ll M^{4/3} |A|^{5/6}$.

\begin{proof}
  To prove 
  the first part of 
  Corollary \ref{cor:3}  we set
$A' = A \cap (A+\alpha)$.
Then $A'\subseteq A$ and $A'-\alpha \subseteq A$.
Thus $A'(A'-\alpha) \subseteq AA$.

Using the condition of the Corollary and Proposition \ref{prop:4}, we obtain
$$
    |A'|^{9/8} \ll |A'(A'-\alpha)| \le |AA| \le M|A| \,,
$$
whence $|A'| \ll (M|A|)^{8/9}$.

To prove the second part let us just note that the number of solutions of the equation $a_1- a_2 = \alpha$, $a_1,a_2 \in A$ equals $|A\cap (A+\alpha)|$ and can bounded as
$$
    |A\cap (A+\alpha)| \ll
        |A|^{-2} |\{ p a^{-1} - p_* a^{-1}_* = \alpha : p,p_* \in AA,\, a,a_* \in A \}| \,.
$$
Applying Corollary \ref{cor:n} (b) with $A=A^{-1}$, $n=|A|^2$, $m=M^2 |A|^2$, we see that the last quantity is bounded by $M^{4/3} |A|^{5/6}$ as required.
\end{proof}

\subsection{Sums of images of polynomials} In the forthcoming proofs we use Theorem \ref{mish} rather than Corollary \ref{cor:1}, for the parameter sets defining the family of lines $L$ are not projectively equivalent to Cartesian products. Moreover, strictly speaking it should be the weighted Theorem \ref{mishw}, due to branching of the squares and cubes, but we bypass this, as the number of branches  is two or three, so the weights are $O(1)$. Modulo this, the general strategy remains the same.
\label{sec:sums-imag-polyn}

\subsubsection{Proof of Proposition \ref{prop:5}}

Recall that claim (a) of Proposition \ref{prop:5} states that for $|A|<p^{3/5}$, one has \begin{equation}\label{quote} |A+A|^3 |A^2+A^2|^4 \gg |A|^8,\qquad |A+A^2|\gg |A|^{6/5}.\end{equation}

  \begin{proof}[Proof of Claim (a)] Suppose, $|A|>C$ for some large enough absolute $C$.
 Set $S=A^2 + A^2$, $|S|=K|A|$. Use Lemma \ref{l:plunnecke-ruzsa} to refine $A$ to a large subset $A'$, so that
\begin{equation}\label{useplu}
|{A'}^2+{A'}^2 + {A'}^2| \ll K^2|A|.\end{equation}
Replace $A$ with $A'$ using the same notations $A,S$.

 Let $\E$ be the number of solutions of the equation
\begin{equation}\label{3eq}
c^2+ a^2-b^2 =  {c'}^2+ {a'}^2-{b'}^2,
\end{equation}
  with variables in $A$. Rewrite the left-hand side as
  $$
 c^2 + 2as - s^2,\qquad s=a+b.
  $$
 Thus $\E = \E(L,A)$ where a single line in the set of lines $L=L_P$, where
 $$P=\{(2s, c^2-s^2):\, s\in A+A,\, c\in A.\}$$
 With our notation we cannot write
$L\simeq L_{(A+A)\times A^2}$, for the sets $P$ is  not projectively equivalent to a $|A+A|\times |A^2|$ grid. However, as it is within the proof of Theorem \ref{main}, $\E$ is the number of incidences between planes and points in $F^3$, where the set of, say points is $P\times A$.

Assuming $|A|^2|A+A|< p^2$ we apply Theorem \ref{mish}, which gives the same estimate as \eqref{app} above.
In the application of Theorem \ref{mish} within Theorem \ref{main} a point/plane can, strictly speaking, have an integer weight, for both $\pm c$ yield the same element of $A^2$. So, weights are bounded by $2$, and this gets absorbed into constants. The main term in the right-hand side of the estimate \eqref{app} reappearing here,  once again dominates (or there is nothing to prove), that is
$$\E\ll |A|^{3}|A+A|^{3/2}.$$

It follows by Cauchy-Schwarz and \eqref{useplu} that after rearranging \eqref{3eq} so that there are only plus sings on both sides,  one has
$$
K^2|A| \gg |A|^3/|A+A|^{3/2},
$$
which completes the proof. Indeed,  if $|A+A| > |A|^{4/3}$, the claim holds trivially, otherwise $|A|^2|A+A|<p^2$, and Theorem \ref{main} applies if $|A|<p^{3/5}.$

Return to equations \eqref{useplu} and  \eqref{3eq} we may now assume, by analogue with the former one, that $A$ is such that
$$
|A+A^2+A^2| \ll |A+A^2|^2/|A|,
$$
and in the former one replace the terms $c^2,{c'}^2$ with just $c,c'\in A$. It follows from the same as above energy argument that
$$
|A+A^2+A^2| |A+A|^{3/2} \gg |A|^3.
$$

Combining the last two inequalities with  the Pl\"unnecke inequality in the form (\ref{2pl})
$$
|A+A|\ll |A+A^2|^2/|A|,
$$
we arrive in
$$
|A+A^2|\gg |A|^{11/10},
$$
under the same constraint $|A|<p^{3/5}.$
\end{proof}

Recall that claim (b) of Proposition \ref{prop:5} states that for $|A|<p^{3/5}$, one has \[ |A-A|^3 |A^3+A^3| \gg |A|^{17/4},\qquad |A+A^3|\gg |A|^{29/28}.\]

  \begin{proof}[Proof of Claim (b)]
To deal with cubes, we use the discrete derivative and complete a square. The former aspect necessitates $A-A$ appearing in the estimate; to replace it with $A+A$ one would have to use Pl\"unnecke-Ruzsa inequalities.

Suppose, $|A|>C$ for some large enough absolute $C$. First, refine $A$ to a large subset $A''$, as follows.

Choose a large $A'\subseteq A$, such that
\begin{equation}
|{A'}^3+{A'}^3+{A'}^3|\ll |{A}^3+{A}^3|^2/|A|,
\label{threesome}
\end{equation}
by the Pl\"unnecke-Ruzsa inequality \eqref{f:plunnecke-ruzsa1'}.

Then choose a large subset $A''\subseteq A'$, such that, after another application of the Pl\"unnecke-Ruzsa inequality \eqref{f:plunnecke-ruzsa1'}, one has
\begin{equation}
|-A''+A'+A'+A'|\ll |A-A|^3/|A|^2.
\label{foursome}
\end{equation}

Reset the notation $A''$ back to $A$.

Let $b=a+d$, where $a,b\in A$, $d\in A-A$. Then
\begin{align}
\label{diff}
b^3-a^3 &= (a+d)^3 - a^3 = 3d(a^2 + ad + (1/3)d^2)\\
&= 3d[ (a+d/2)^2 + (1/12) d^2 ] = 3d[ s^2 + (1/12)d^2],     \nonumber
\end{align}
where $s = a + d/2.$

Let  $\E$ be the number of solutions of the following equation:
\begin{equation}\label{refr}
c^3+ b^3-a^3  =  {c'}^3 + {b'}^3 - {a'}^3,\qquad a,\ldots,c'\in A,\end{equation}

Then, using \eqref{threesome} and recalling that $A$ is now $A''\subseteq A'$, the Cauchy-Schwarz inequality yields
$$
|{A}^3+ {A}^3|^2/|A| \gg |A|^6/\E,
$$
once the terms in \eqref{refr} have been rearranged, so that there are plus signs only.

By \eqref{diff} the left-hand side of the equation defining $\E$ can be rewritten as
$$
(c^3+ d^3/4) + 3d s^2 =\ell (s^2),
$$
where the lines  $\ell$ in a set $L_P$ have equations
$$
y = 3d \,x + (d^3/4+c^3): \; d\in A-A, \,c\in A.
$$
The lines in $L_P$ are defined by covectors $(3d, d^3/4+c^3)$, which lie on the translates of the same cubic through the origin, thus the number of collinear ones is $O(|A-A|)$.

The cardinality of the set of values of $s=a+d/2$,
is at most $|A+A+A-A|,$ whose size (in view oh the refinements done) is bounded by the right-hand side of \eqref{foursome}.

By Theorem \ref{mish} (with $O(1)$ weights hidden in the $\ll$ symbols) we have, given that
$|A-A|^4/|A|<p^2$,
$$
\E\ll|A-A|^{6}  |A|^{-3/2} + |A-A|^7|A|^{-3}.
$$
If the second term dominates, there is nothing to prove, so we conclude that for $|A|<p^{3/5}$
\begin{equation}
|{A}^3+ {A}^3| |A-A|^{3}  \gg |A|^{17/4}.
\label{ceq}\end{equation}
Indeed, if $|A-A|\geq |A|^{13/12}$ the latter inequality holds trivially; otherwise the condition $|A-A|^4/|A|<p^2$ is satisfied.

Returning to  equation \eqref{refr},  we set $c,c'\in A$ and repeat the argument, resulting in
$$
|{A}+{A}^3| |A-A|^{3}  \gg |A|^{17/4},
$$
By the Ruzsa distance inequality,
$$
|A-A|\ll |A+A^3|^2/|A|,
$$
so we  have  $|A+A^3|\gg |A|^{29/28}$.
\end{proof}

\subsubsection{Proof of Corollary \ref{cor:4}}

Recall that Corollary \ref{cor:4} states that for $|A|<p^{8/15}$ and $|A|<p^{7/12},$ respectively, one has
\[
|(A-A)^2 +(A-A)^2| \gg |A|^{9/8},\qquad |(A-A)^3 +(A-A)^3|\gg |A|^{36/35}.
\]

\begin{proof}
     To prove the first inequality, observe that $|\Delta:=(A-A)^2 + (A-A)^2| \geq |A-A|$ trivially. By the Pl\"unnecke inequality \eqref{fatpl}
   \begin{equation}\label{au}
   |A-A+A-A|\leq K^4|A|,
\end{equation}
  where $|A-A|=K|A|.$

Applying the first inequality claimed by Proposition \ref{prop:5}  to the set $A-A$ we have, by \eqref{au}:
$$
   |\Delta|^4 K^{12} |A|^3 \gg  K^8|A|^8,$$
   hence $|\Delta|\gg |A|^{5/4}/K$. Optimising with $|\Delta|\geq K|A|$
   leads to the claim $|\Delta|\gg|A|^{9/8}.$  If one looks back to the proof of Proposition \ref{prop:5}, the constraint in terms of $p$ there becomes $|A-A|^2|A-A+A-A|<p^2$, which holds if
  $K^6|A|^3 < p^2.$ If $K>|A|^{1/8}$ the claim of this corollary holds trivially. Otherwise we arrive in the condition $|A|<p^{8/15}$  as stated.

\medskip
To prove the second inequality,  set $D=A-A$. From \eqref{ceq} and using \eqref{au} we have
  $$
  |D^3+D^3|\gg |D|^{17/4}/|D-D|^3 \gg K^{-31/4}|A|^{5/4}.
  $$
  Optimising it with the other trivial lower bound $K|A|$ we have  $|D^3+D^3|\gg |A|^{36/35}.$ The estimate is valid for $|A|<p^{7/12}$.
   Indeed, estimate \eqref{ceq} with $A-A$ replacing $A$ is valid for $|A-A|<p^{3/5}$. If $|A-A|\geq |A|^{36/35}$ our claim about cubic distances holds trivially. Otherwise $|A|<p^{7/12}$ implies
   $|A-A|<p^{3/5}$.
\end{proof}

\subsection{Sums of reciprocals}
\label{sec:sums-reciprocals-1}
\subsubsection{Proof of Proposition \ref{prop:8}}
Recall that Proposition \ref{prop:8} states that for  $A\in F_*$, with  $|A|<p^{5/8}$ one has
$$
|A+A|+|1/A+ 1/A| \gg|A|^{16/15}, \qquad |A+A^{-1}| \gg |A|^{31/30}.
$$
\begin{proof}  Following the proof of Theorem 4.1 in \cite{Bo}, we denote $X=(A^{-1}+A^{-1})^{-1}$, $S=(A+A)^{-1}$, so for $a,b\in A,\,a-\frac{a^2}{a+b}\in X$. Applying Cauchy-Schwarz yields
\begin{equation}
|\{(a,a',s)\in A\times A\times S:\,(a-sa^2,a'-s{a'}^2)\in X\times X\}|\geq|A|^4/|S|.
\label{qt}\end{equation}
The left-hand side can be interpreted as the number of incidences between $X\times X$ and the set of $O(|A|^2)$ lines with equations
$$y= \frac{{a'}^2}{a^2} x + a'\left(1-\frac{a'}{a}\right).$$
Assuming that $|X|<p^{2/3}$ we apply the incidence bound of Corollary \ref{cor:n}, which together with \eqref{qt} yields
$$
|A|^{8/3} \ll |X|^{3/2} |A+A| \,.
$$
Note that if $|A|<p^{5/8}$, the constraint $|X|<p^{2/3}$  may  be violated only if $|X|>|A|^{16/15}$; otherwise the latter estimate holds and implies the first inequality claimed by the proposition.

The second inequality follows by the Pl\"unnecke-Ruzsa inequality \eqref{2pl}, since both
$$
|A+A|,\; |1/A+ 1/A|\;\leq\; |A+A^{-1}|^2/|A|.
$$\end{proof}

\subsubsection{Proof of Corollary \ref{cor:x}}

Recall that Corollary \ref{cor:x} states that for $A\subset F_*$, with  $|A|<p^{5/8}$, and any $\alpha \neq 0$ one has
$$
    |A \cap \alpha/A| \ll |A+A|^{15/16}.
$$

\begin{proof}
We set
$A' = A \cap \alpha/A$.
Then $A' + A' \subseteq A+A$ and $\alpha/A' + \alpha/A' \subseteq A+A.$ The claim follows by applying 
Proposition \ref{prop:8} to $A'$.
\end{proof}

\subsubsection{Proof of Proposition \ref{prop:6}}

Recall that Proposition \ref{prop:6} states that for  $A\subset F_*$, with $|A|<p^{8/15}$, one has \[ |1/A + 1/(A+A)| \gg |A|^{9/8}.\]

\begin{proof}
We let $X=(A^{-1}+A^{-1})^{-1}$, $S=(A+A)^{-1}$ as in the previous proof, as well as $Y:=A^{-1}+S$.
Since in general there is no inclusion of $A^{-1}$ into $S$, we note the bound
\begin{equation}\label{aux}
  |X| \leq \frac{|A^{-1}+S|^2}{|S|}=\frac{|Y|^2}{|S|},
\end{equation}
by the Pl\"unnecke inequality \eqref{2pl}.

Let $\E$ be the number of solutions of the following equation, with variables in $A$:
$$
\frac{1}{c}+ \frac{1}{a+b}=\frac{1}{c'} + \frac{1}{a'+b'},
$$
where we assume that $a+b,a'+b'\neq 0.$

Note that with
$$x=(a^{-1}+b^{-1})^{-1} =  b - \frac{b^2}{a+b},$$
one has $\frac{1}{a+b}  = b^{-1}- b^{-2}x.$

Hence $\E=\E(L,X)$, where $L=L_P,$
with
$$
P=\{ (-b^{-2}, b^{-1}+c^{-1}):\, b.c\in A\}.
$$
To estimate the quantity $\E(L,X)$ we apply Theorem \ref{mish}, the  point set in question being $Q=P\times X$, see also the proof of Proposition \ref{prop:5}. Similarly to the latter case, here a point/plane can also have an integer weight, but only up to $2$, and this gets absorbed into constants.

Assuming $|A|^2|X|<p^2$ we once again arrive in the bound \eqref{app} for $\E$, with $X$ replacing $A+A$ in the right-hand side. The first term in the estimate  dominates, or there is nothing to prove. Indeed, the converse case implies that $|X|\gg |A|^2$, in which case $Y$ is also large, by \eqref{aux}.

Noting once more \eqref{aux} we now have
$$\E\ll |A|^{3} |Y|^{3}/|A+A|^{3/2}.$$

Hence, by Cauchy-Schwarz,
$$
    |Y|\gg |A|^6/\E
$$
or, equivalently,
$$
 |Y|^4\gg |A|^3|A+A|^{3/2}\geq |A|^{9/2},
$$
with the trivial estimate for $|A+A|$ or $|S|$.

We conclude that
$|Y|\gg |A|^6/\E \gg |A|^{9/8}$, given that $|A|< p^{8/13}$. Indeed, the latter condition in view of \eqref{aux} ensures the constraint $|A|^2|X|<p^2$ for the above application of the incidence estimate of Theorem \ref{mish}.
\end{proof}

\section{Acknowledgements}

The fourth  author was supported by grant Russian Scientific  Foundation RSF 14-11-00433.

\end{document}